\newcommand{\FT}{\,\, \widehat{} \,\,}
\newcommand{\init}{\vert_{t = 0}}
\newcommand{\abs}[1]{\left\vert #1 \right\vert}
\newcommand{\norm}[1]{\left\Vert #1 \right\Vert}
\newcommand{\bignorm}[1]{\bigl\Vert #1 \bigr\Vert}
\newcommand{\C}{\mathbb{C}}
\newcommand{\R}{\mathbb{R}}
\newcommand{\innerprod}[2]{\left\langle \, #1 , #2 \,
\right\rangle}
\newcommand{\angles}[1]{\langle #1 \rangle}
\DeclareMathOperator{\re}{Re} 
 \DeclareMathOperator{\supp}{supp}
\newtheorem{theorem}{Theorem}
\newtheorem{lemma}{Lemma}
\newtheorem{corollary}{Corollary}
\theoremstyle{definition}
\theoremstyle{remark}
\newtheorem{remark}{Remark}
\title[DKG in one space dimension]{Global
Well-posedness of the 1D Dirac-Klein-Gordon system in Sobolev spaces
of negative index}
\author[Achenef Tesfahun]{Achenef Tesfahun }
\thanks{Supported by the Research Council of Norway, project no.\
160192/V30, PDE and Harmonic Analysis. Address: Norwegian University
of Science and Technology, Department of Mathematical Sciences,
Alfred Getz' vei 1, N-7491 Trondheim, Norway. Email:
tesfahun@math.ntnu.no}
\subjclass[2000]{35Q40; 35L70}
\begin{document}

\maketitle
\begin{abstract}
We prove that the Cauchy problem for the Dirac-Klein-Gordon system
of equations in 1D is globally well-posed in a range of Sobolev
spaces of negative index for the Dirac spinor and positive index for
the scalar field. The main ingredient in the proof is the theory of
``almost conservation law'' and ``$I$-method'' introduced by
Colliander, Keel, Staffilani, Takaoka and Tao. Our proof also relies
on the null structure in the system, and bilinear spacetime
estimates of Klainerman-Machedon type.
\end{abstract}

\section{Introduction} We consider the
Dirac-Klein-Gordon system (DKG) in one space dimension,
\begin{equation}\label{DKG1}
\left\{
\begin{aligned}
&  \left(-i(\gamma^0\partial_t +\gamma^1\partial_x)+M\right)\psi
=\phi \psi,
   \\
& (-\square  + m^2) \phi= \innerprod{\gamma^0 \psi}{\psi}_{\C^2},
\qquad ( \square = -\partial_t^2 +
\partial_x^2)
\end{aligned}
\right.
\end{equation}
with initial data
\begin{equation}\label{data0}
   \psi \init = \psi_0 \in H^s, \qquad \phi \init =
\phi_0 \in H^r, \qquad
\partial_t \phi \init = \phi_1 \in H^{r-1}.
\end{equation}
Here \ ($t,x)\in \R^{1+1}$, \ $\psi=\psi(t,x) \in \C^2$ is the Dirac
spinor and $\phi=\phi(t,x)$ is the scalar field which is
real-valued; $M, m>0$ are constants. Further, $
\innerprod{w}{z}_{\C^2}=z^*w$ for column vectors $w, z \in \C^2$,
where $z^*$ is the complex conjugate transpose of $z$; $
H^s=(1-{\partial_x^2})^{-s/2}L^2(\R)$ is the standard Sobolev space
of order $s$, and $\gamma^0$ and $\gamma^1$ are the Dirac matrices
given by
  $$ \gamma^0 =    \begin{pmatrix}
     0 & 1  \\
     1 &0
   \end{pmatrix},
   \qquad
   \gamma^1=  \begin{pmatrix}
     0 & -1  \\
     1 & 0
   \end{pmatrix}.
$$
We remark that with this choice the general requirements for Dirac
matrices are verified:
$$
\gamma^\mu\gamma^\nu+ \gamma^\nu\gamma^\mu=2g^{\mu\nu}I, \qquad
(\gamma^0)^{*}=\gamma^0, \qquad (\gamma^1)^{*}=-\gamma^1
$$
for $\mu, \nu=0,1$, where $(g^{\mu\nu})= ( \begin{smallmatrix}
     1 & 0  \\
     0 & -1
   \end{smallmatrix}$).

  We are interested in studying low regularity global solutions of the
  DKG system \eqref{DKG1} given the initial data \eqref{data0}.
   Global well-posedness (GWP) of DKG in 1d was first proved by
Chadam ~\cite{c1973} for data $$(\psi_0, \phi_0, \phi_1)\in
H^1\times H^1\times L^2. $$ This result has been improved over the
years in the sense that the regularity requirements on the initial
data which ensure global-in-time solutions can be lowered. The
earlier known GWP results for DKG in 1d are summarized in Table
\ref{Table1}.

\begin{table} \caption{GWP for DKG in 1d for data $(\psi_0, \phi_0,
\phi_1)\in H^s \times H^r \times H^{r-1}$.} \label{Table1}
\def\arraystretch{1.3}
\begin{center}
\begin{tabular}{|c|c|c|c|c|c|}
   \hline
     & $s$ & $r$
   \\
   \hline
   \hline
  Chadam  ~\cite{c1973}, 1973 & $1$ & $1$
   \\
   \hline
   Bournaveas ~\cite{b2000}, 2000 & $0$ & $1$
   \\
   \hline
Fang  ~\cite{f2001},  2001 & $0$ & $(1/2, 1]$
   \\
   \hline
Bournaveas and Gibbeson  ~\cite{bg2006}, 2006 & $0$ & $(1/4, 1]$
   \\
   \hline
Machihara ~\cite{m2006}, Pecher ~\cite{p2006}, 2006 & $0$ & $(0, 1]$
      \\
\hline
   Selberg  ~\cite{s2007}, 2007 & $(-1/8,0)$ & $(-s+\sqrt{s^2-s}, 1+s]$
  \\
\hline
\end{tabular}
\end{center}
\end{table}

\medskip\noindent

It is well known that when $s\ge 0$, the question of GWP of
\eqref{DKG1}, \eqref{data0} reduces to the corresponding local
question essentially due to the conservation of charge:
$$\norm{\psi(t,.)}_{L^{2}}=\norm{\psi_0}_{L^{2}}.$$
However, when $s<0$ there is no applicable conservation law. So even
if we have a local well-posedness (LWP) result for $s_0<s<0$ for
some $s_0$, it seems that we are stuck when trying to extend this to
a global-in-time solution.

The first breakthrough for resolving such problems came from
Bourgain ~\cite{b1998} who considered the cubic, defocusing
nonlinear Schr\"{o}dinger (NLS) equation in 2d, and proved GWP of
NLS below the (conserved) energy norm, i.e., below $H^1$. The idea
behind this method for a PDE is to split the rough initial data
(data whose regularity is below the conserved norm; say the $L^2$
norm from now on) into low and high frequency parts, using a Fourier
truncation operator. Consequently, one splits the PDE into two,
corresponding to the initial data with low and high frequencies. The
data with low frequency becomes smoother, in fact it is in $L^2 $ ,
so by global well-posedness its evolution remains in $L^2$ for all
time.

On the other hand, the difference between the original solution and
the evolution of the low frequency data satisfies a modified
nonlinear equation evolving the high-frequency part of the initial
data. The homogeneous part of this evolution is of course no
smoother than the initial data (so it may not be in $L^2$), but the
inhomogeneous part may be better due to nonlinear smoothing effects.
If the nonlinear smoothing brings the inhomogeneous part into $L^2$,
then at the end of the time interval of existence this part can be
added to the evolution of the low-frequency data, and the whole
process can be iterated. Assuming that sufficiently good a priori
estimates are available, this iteration allows one to reach an
arbitrarily large existence time, by adjusting the frequency cut-off
point of the original initial data. Several authors used Bourgain's
method to prove GWP of dispersive and wave equations with rough
data.

Recently, Selberg ~\cite{s2007} used Bourgain's method to prove GWP
of 1d-DKG below the charge norm, obtaining the following result (for
a comparison with earlier results, see Table \ref{Table1}):
\begin{theorem}\label{thm-boug-gwp} The DKG system \eqref{DKG1} is
   GWP for data \eqref{data0} provided
$$
-\frac18<s<0, \quad -s+\sqrt{s^2-s}< r\le 1+s.
$$
\end{theorem}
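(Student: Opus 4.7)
The plan is to prove Theorem \ref{thm-boug-gwp} by combining Bourgain's high/low frequency decomposition of the initial data with a nonlinear smoothing estimate derived from the null structure hidden in the DKG system. First I would diagonalize the Dirac operator by setting $\Pi_\pm = \frac{1}{2}(I \mp \gamma^0\gamma^1)$ and $\psi_\pm = \Pi_\pm \psi$, so that the principal part of the Dirac equation separates into two transport equations along the two null directions; the scalar field is similarly decomposed into halves propagating along the Klein-Gordon rays $\tau = \pm \langle\xi\rangle_m$. In this basis, both the coupling $\phi\psi$ in the Dirac equation and the source $\langle \gamma^0\psi,\psi\rangle$ in the wave equation become null forms, because the interacting modes travel along non-parallel characteristics. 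Working in Bourgain spaces $X^{s,b}_\pm$ adapted to these cones, in which the null forms enjoy bilinear spacetime estimates of Klainerman-Machedon type, a standard contraction argument yields local well-posedness on a time interval $[0,\delta]$ whose length depends polynomially on the data norms.

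To go global I would follow Bourgain: fix a large parameter $N$ and split the data as $\psi_0 = P_{\le N}\psi_0 + P_{>N}\psi_0$, and analogously for $(\phi_0,\phi_1)$. The low-frequency piece lies in $L^2\times H^1\times L^2$ with norm controlled by $N^{-s}$ and $N^{1-r}$, so by Chadam's classical theorem the low-frequency problem admits a global solution $(\psi^\flat,\phi^\flat)$ whose spinor is charge-conserved. Writing the full solution as $(\psi^\flat + w,\phi^\flat + v)$, the perturbation $(w,v)$ solves a modified DKG system whose free (linear) part remains merely in $H^s\times H^r$, but whose Duhamel contribution gains regularity: the same bilinear null-form estimates used in the LWP step upgrade the Duhamel part of $w$ to $L^2$ with norm $O(N^{-\alpha})$, and similarly for $v$, for some positive exponent $\alpha = \alpha(s,r)$. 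This nonlinear smoothing allows the Duhamel piece to be absorbed into the low-frequency profile at time $\delta$, so the process can be reinitialized and iterated.

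The number of iterations needed to reach a prescribed time $T$ is $O(T/\delta)$, and the $L^2$ norm of the low-frequency solution grows by at most $O(N^{-\alpha})$ per step, so reaching time $T$ requires $T\cdot N^{-\alpha}$ to remain bounded; this is arranged by choosing $N = N(T)$ sufficiently large, after which reassembling the decomposition produces the claimed global-in-time solution. The main obstacle will be proving the nonlinear smoothing estimate with the correct exponent $\alpha > 0$: the inequality $r > -s + \sqrt{s^2 - s}$ encodes precisely the algebraic condition under which the gain from the Klainerman-Machedon bilinear estimate survives after feeding the Duhamel contribution back through both the Dirac and the Klein-Gordon nonlinearities, while the upper endpoint $r \le 1+s$ prevents high-frequency $\phi$ contributions from overwhelming $\psi$ through the coupling, and the range $-1/8 < s < 0$ is the window in which a positive $\alpha$ can be produced for some admissible $r$ at all. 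Keeping the loss of derivatives consistent across many iterations, and tracking constants in $N$ without incurring additional logarithmic losses, is where the bulk of the technical work will lie.
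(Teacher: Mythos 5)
First, be aware that Theorem \ref{thm-boug-gwp} is not proved in this paper at all: it is quoted from Selberg \cite{s2007}, and the machinery developed here (the $I$-method, the almost conservation law, the modified LWP theorem) is aimed at the strictly larger region of Theorem \ref{mainthm}. Your Bourgain-style high/low frequency decomposition is therefore the route of the cited reference rather than of the present paper; as a strategy it is the right one for the stated result, and the ingredients you list (null structure after diagonalizing the Dirac operator, bilinear $X^{s,b}_\pm$ estimates of Klainerman--Machedon type, Chadam's global theory \cite{c1973} for the $L^2\times H^1\times L^2$ low-frequency evolution) are indeed the ones used there.

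That said, two points in your sketch are genuine gaps rather than deferred routine work. First, the final bookkeeping is wrong as written: the number of iterations is $K\approx T/\delta$, and $\delta$ itself shrinks polynomially in $N$ because the low-frequency data norms grow like $N^{-s}$ and $N^{1-r}$ (both exponents are positive in your range); the quantity that must stay bounded is $(T/\delta)\cdot N^{-\alpha}$, not $T\cdot N^{-\alpha}$. Balancing the exponent of $1/\delta$ against the smoothing gain $\alpha$ is exactly where the curve $r=-s+\sqrt{s^2-s}$ comes from, so this step cannot be waved through. Second, and more seriously, you never address the scalar field: DKG conserves the charge of $\psi$ but has no conservation law for $\phi$, so the $H^1$ norm of the low-frequency $\phi$-evolution --- which controls both the local existence time and the constants in your smoothing estimate --- is a priori only reproduced up to a multiplicative constant $C>1$ at each step, giving $C^{K}$ growth that destroys the iteration. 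Both \cite{s2007} and the present paper must work around this by splitting $\phi$ into homogeneous and inhomogeneous parts and writing the homogeneous part as a cascade of free waves (compare \eqref{Iter-Iphi} and the passage from the ``bad'' estimate \eqref{Bn+1-b} to \eqref{Bn+1-mod}), so that the free part is propagated by the energy estimate without losing a constant at every step. Without some such device your induction does not close; this is not a detail but the central structural difficulty of going global for DKG with rough data.
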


Concerning LWP of 1d-DKG the best result so far, which we state in
the next theorem, is due to S. Selberg and the present author
~\cite{st2007}, building on earlier results by several authors; see
~\cite{c1973}, ~\cite{b2000}, ~\cite{f2001}, ~\cite{bg2006},
~\cite{m2006} and ~\cite{p2006}.
\begin{theorem}\label{thm-lwp}
The DKG system \eqref{DKG1} is LWP for data \eqref{data0} if
$$
s>-\frac14, \quad r>0, \quad \abs{s}\le r\le 1+s.
$$
\end{theorem}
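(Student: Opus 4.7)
The plan is to establish local well-posedness by a Picard iteration in Bourgain-type $X^{s,b}$ spaces adapted to the characteristic surfaces of the Dirac and Klein-Gordon operators, making essential use of the null structure present in both nonlinearities. First I would diagonalize the Dirac operator. With the given matrices one has $\gamma^0\gamma^1 = \mathrm{diag}(1,-1)$, so multiplying the Dirac equation on the left by $\gamma^0$ and writing $\psi=(\psi_+,\psi_-)^T$ in the standard basis gives two half-wave equations
\[
(\partial_t \pm \partial_x)\psi_\pm = -iM\psi_\mp + i\phi\psi_\mp,
\]
whose symbols vanish on the null lines $\tau=\mp\xi$. For the Klein-Gordon equation I split $\phi=\phi_+ + \phi_-$ with $\phi_\pm = \tfrac12(\phi \mp i\langle D\rangle_m^{-1}\partial_t\phi)$, where $\langle D\rangle_m = \sqrt{-\partial_x^2 + m^2}$, producing two half-wave equations with characteristic sets $\tau=\pm\langle\xi\rangle_m$.

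Next I would extract the null structure. A direct computation gives $\langle\gamma^0\psi,\psi\rangle_{\C^2} = 2\,\mathrm{Re}(\overline{\psi_-}\psi_+)$, a product of the two oppositely propagating components of the spinor, so the source of the Klein-Gordon equation is a genuine null form. Likewise, expanding $\phi\psi$ in the diagonal basis into $\sum_{\pm_1,\pm_2}\phi_{\pm_1}\psi_{\pm_2}$ yields bilinear interactions for which either the transversality of the characteristic sets $\{\tau=\pm_1\langle\xi\rangle_m\}$ and $\{\tau=\pm_2\xi\}$ or the additional regularity of $\phi$ can be exploited. With $b$ slightly above $1/2$, I would then prove Klainerman-Machedon-type bilinear estimates of the schematic form
\[
\bignorm{\overline{\psi_{\pm_1}}\psi_{\pm_2}}_{X^{r-1,b-1}_{\tau=\pm\langle\xi\rangle_m}} \lesssim \bignorm{\psi_{\pm_1}}_{X^{s,b}}\bignorm{\psi_{\pm_2}}_{X^{s,b}}
\]
and
\[
\bignorm{\phi_{\pm_1}\psi_{\pm_2}}_{X^{s,b-1}_{\tau=\pm\xi}} \lesssim \bignorm{\phi_{\pm_1}}_{X^{r,b}_{\tau=\pm_1\langle\xi\rangle_m}}\bignorm{\psi_{\pm_2}}_{X^{s,b}_{\tau=\pm_2\xi}},
\]
valid precisely in the range $s>-\tfrac14$, $r>0$, $|s|\le r\le 1+s$.

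Once these estimates are in place, LWP follows by the usual contraction-mapping argument on a time-truncated Bourgain space; the existence time comes from the gain $T^{\epsilon}$ produced by taking $b$ strictly above $1/2$. The principal obstacle is proving the bilinear estimates down to the stated endpoints. I would dyadically decompose in both physical and modulation frequency, split into the high-high interactions producing a low output and the high-low interactions producing a high output, and in each resonance regime exploit either the null gain (quantitatively, the angular/wavenumber separation between the two interacting frequency supports) or the hyperbolic weights $\langle\tau\mp|\xi|\rangle$ and $\langle\tau\mp\langle\xi\rangle_m\rangle$ to make up the regularity deficit. The threshold $s>-\tfrac14$ is exactly what the $\overline{\psi}\psi\to\phi$ estimate can afford in the high-high to low regime, while the constraint $|s|\le r\le 1+s$ encodes the matching scaling balance in the two bilinear estimates.
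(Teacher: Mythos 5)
Your outline is correct and follows essentially the same route as the cited proof in \cite{st2007} (which the paper quotes rather than reproves): the same diagonalization into the oppositely propagating components $u,v$ of \eqref{DKG2}, the same identification of the null form $2\re(u\overline v)$ as a product of $+$ and $-$ modes, and the same bilinear estimates in $X^{s,b}_\pm$ and $H^{s,b}$ spaces with $b=\tfrac12+\varepsilon$ — essentially those of Lemma \ref{lemmambedding:1}, whose conditions \eqref{cond:embed1} are exactly what produce the range $s>-\tfrac14$, $r>0$, $|s|\le r\le 1+s$ — closed by contraction on a time slab. (In 1d the ``null gain'' is of course not angular but is the sign separation quantified by Lemma \ref{lemma-comparsion}, which is what you describe.)
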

As mentioned earlier, when $s\ge 0$ this LWP result can be extended
to GWP result essentially due to the presence of conservation of
charge. So in view of Theorem \ref{thm-lwp}, we have the following
(see also Table \ref{Table1}):
\begin{theorem}\label{thm-consv-gwp}
The DKG system \eqref{DKG1} is GWP for data \eqref{data0} provided
$$
s\ge 0, \quad r>0, \quad \abs{s}\le r\le 1+s.
$$
\end{theorem}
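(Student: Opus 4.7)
The plan is to combine Theorem \ref{thm-lwp} with the conservation of charge $\norm{\psi(t,\cdot)}_{L^2} = \norm{\psi_0}_{L^2}$ via a standard continuation argument. Since the local time of existence in Theorem \ref{thm-lwp} depends only on a decreasing function of the $H^s \times H^r \times H^{r-1}$ norm of the data, it suffices to establish that this norm stays bounded on any compact time interval; an iteration then extends the solution to all times.

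For the endpoint $s = 0$, the $H^s$-norm of $\psi$ is just the conserved $L^2$-norm, hence automatically globally bounded. For the Klein-Gordon component I would write the Duhamel formula for $\phi$ together with the standard energy inequality
$$
\norm{\phi(t)}_{H^r} + \norm{\partial_t \phi(t)}_{H^{r-1}} \leq C\bigl(\norm{\phi_0}_{H^r} + \norm{\phi_1}_{H^{r-1}}\bigr) + C\int_0^t \bignorm{\innerprod{\gamma^0\psi(\tau)}{\psi(\tau)}_{\C^2}}_{H^{r-1}} \, d\tau,
$$
and bound the integrand by $\norm{\psi(\tau)}_{L^2}^2 = \norm{\psi_0}_{L^2}^2$. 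For small $r$ this follows at once from the 1D embedding $L^1 \embeds H^{r-1}$ together with H\"older; for $r$ closer to $1+s$ one invokes the bilinear null-form estimate of Klainerman--Machedon type that is already used in the proof of Theorem \ref{thm-lwp}. The resulting bound grows at most linearly in $t$, so the Klein-Gordon norms remain finite on any compact time interval and Theorem \ref{thm-lwp} can be iterated indefinitely. This closes the $s = 0$ case.

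For $s > 0$ the remaining task is persistence of regularity. With $\norm{\psi(t)}_{L^2}$ and $\norm{\phi(t)}_{H^r} + \norm{\partial_t\phi(t)}_{H^{r-1}}$ now globally controlled (by the previous step applied at the base regularity $s = 0$), one partitions $[0,T]$ into subintervals on which Theorem \ref{thm-lwp} can be re-applied, the local lifespan at each step being controlled from below by the uniform bound on the lower-regularity norm. This forbids finite-time blow-up of $\norm{\psi(t)}_{H^s}$.

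The only nontrivial technical step is estimating the quadratic source $\innerprod{\gamma^0\psi}{\psi}_{\C^2}$ in $H^{r-1}$ uniformly in the conserved charge $\norm{\psi}_{L^2}$; for $r$ near the upper boundary $1+s$ this forces us to exploit the null structure of the DKG system, but the required bilinear machinery is precisely what underlies Theorem \ref{thm-lwp} and is therefore already available.
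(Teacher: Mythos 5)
Your proposal is correct and follows essentially the same route the paper takes: the paper states Theorem \ref{thm-consv-gwp} as an immediate consequence of the local theory (Theorem \ref{thm-lwp}) combined with the conservation of charge \eqref{conserv-charge}, which is exactly the continuation argument you describe, with the growth of the Klein--Gordon norms controlled through the energy inequality and the bilinear (null-form) estimates already underlying the local result. Your additional remarks on the $L^1\embeds H^{r-1}$ case for small $r$ and on persistence of regularity for $s>0$ are consistent with the standard details the paper leaves implicit.
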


However, in view of Theorems \ref{thm-lwp}, \ref{thm-consv-gwp} and
\ref{thm-boug-gwp}, there is still a gap left between the local and
global results known so far. In the present paper, we shall relax
the lower bound of $r$ in Theorem \ref{thm-boug-gwp}. In particular,
we fill the following gap left by Theorem ~\ref{thm-boug-gwp} (see
Figure \ref{fig:1}):
$$
-\frac18<s<0, \quad s+\sqrt{s^2-s}<r\le-s+\sqrt{s^2-s}
$$

 We now state our Main theorem.
\begin{theorem}\label{mainthm}
The DKG system \eqref{DKG1} is GWP for data \eqref{data0} if (see
Figure \ref{fig:1})
$$
-\frac18<s<0, \quad \quad s+\sqrt{s^2-s}< r\le 1+s.
$$
\end{theorem}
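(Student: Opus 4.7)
The plan is to combine the local well-posedness theorem (Theorem \ref{thm-lwp}) with an almost conservation law for a suitably modified charge, in the spirit of the $I$-method of Colliander-Keel-Staffilani-Takaoka-Tao. Fix a smooth radial Fourier multiplier $I = I_N$ that equals the identity on $\{|\xi| \le N\}$ and decays like $(N/|\xi|)^{|s|}$ on $\{|\xi| \ge 2N\}$, so that $I : H^s \to L^2$ is bounded with operator norm $\lesssim N^{|s|}$. Applying $I$ to \eqref{DKG1} produces a modified system for $(I\psi, I\phi)$ whose nonlinearities differ from the original ones by commutator terms. On a short interval $[0,\delta]$ of length given by Theorem \ref{thm-lwp}, the goal is to control $\|I\psi(\delta)\|_{L^2}^2$ together with a modified energy $\|I\phi(\delta)\|_{\dot H^1}^2 + \|I\partial_t\phi(\delta)\|_{L^2}^2$ in terms of their values at time zero, modulo an error that decays as a positive power of $N$.

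The heart of the argument is an almost conservation estimate of the form
\[
\bigl|\,\|I\psi(\delta)\|_{L^2}^2 - \|I\psi(0)\|_{L^2}^2\,\bigr| \;\lesssim\; N^{-\alpha}
\]
for some $\alpha = \alpha(s,r) > 0$, together with an analogous bound for the modified scalar-field energy. Differentiating $\|I\psi\|_{L^2}^2$ in time and inserting the Dirac equation reduces the left-hand side to a spacetime integral of the commutator form $\bigl\langle [I,\phi]\,\psi\,,\,I\psi\bigr\rangle_{L^2_x}$ on $[0,\delta]$. To control it I would first decompose $\psi$ into its $\pm$-projections $\Pi_\pm\psi$ in order to expose the null structure in both bilinear terms $\phi\psi$ and $\innerprod{\gamma^0\psi}{\psi}_{\C^2}$, then perform a Littlewood-Paley decomposition of each of the three factors, and finally invoke the null-form bilinear $X^{s,b}$ estimates of Klainerman-Machedon type used in \cite{st2007}. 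The decisive point is that the commutator is negligible unless at least one of the interacting frequencies is $\gtrsim N$, which is precisely where the $N^{-\alpha}$ gain originates; the bookkeeping between the smoothing provided by $I$ and the bilinear estimates determines how $\alpha$ depends on $(s,r)$.

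Once the almost conservation bound is in hand, the iteration is standard: partition $[0,T]$ into $\sim T/\delta$ subintervals and apply the bound on each, so that the total charge growth over $[0,T]$ is $\lesssim (T/\delta)\,N^{-\alpha}$; choosing $N = N(T)$ sufficiently large keeps this uniformly bounded, yielding the a priori bounds on $\|\psi(T)\|_{H^s}$ and on $(\phi(T),\partial_t\phi(T))$ in $H^r \times H^{r-1}$ required for continuation. The main obstacle, and what separates this result from Theorem \ref{thm-boug-gwp}, is extracting a positive exponent $\alpha$ throughout the enlarged range $s+\sqrt{s^2-s}<r\le 1+s$. Selberg's argument only produced a modified charge for $\psi$ and could afford to use the null structure in $\phi\psi$ in a relatively coarse way; to push the lower bound on $r$ down from $-s+\sqrt{s^2-s}$ to $s+\sqrt{s^2-s}$ one must simultaneously track a modified energy for $\phi$ and exploit the null form in \emph{both} nonlinearities. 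Producing the sharp bilinear commutator estimate with a positive $\alpha$ in this sharper regime is where essentially all of the technical work will be concentrated.
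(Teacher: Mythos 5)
Your strategy for the spinor is the same as the paper's: apply the multiplier $I$, differentiate $\|I\psi\|_{L^2}^2$ in time, reduce to the commutator $I(\phi\psi)-I\phi\cdot I\psi$, observe it vanishes unless a frequency is $\gtrsim N$, and estimate it with the null-form bilinear $X^{s,b}$ estimates. The gap is in your treatment of the scalar field. You posit ``an analogous bound for the modified scalar-field energy'' $\|I\phi\|_{\dot H^1}^2+\|I\partial_t\phi\|_{L^2}^2$ with an $N^{-\alpha}$ error, but there is no conservation law for $\phi$ to almost conserve: the DKG energy is indefinite in the Dirac part, and differentiating any quadratic functional of $(\phi,\partial_t\phi)$ in time produces the full source term $\int\partial_t\phi\,\re(u\overline v)\,dx$, which is not a commutator, does not vanish when $I$ is the identity, and is of size $O(1)$ per unit time rather than $O(N^{-\alpha})$. (Moreover, with $r\le 1+s$ and $s<0$ one may reduce to $r<1/2+2s<1/2$, and $I$ only smooths by $|s|<1/8$, so $\|I\phi\|_{\dot H^1}$ is not even finite a priori; the correct object is $I^2\phi$ in $H^{r-2s}$.)

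The paper handles this by splitting $\phi=\phi^{(0)}+\Phi$ into free and Duhamel parts. The Duhamel part grows, but only like $C\Delta T\,(\|Iu_0\|_{L^2}^2+\|Iv_0\|_{L^2}^2)+CN^{-1/2+2\varepsilon}(\cdots)$ per step (Lemma \ref{lemma-almostconsve-phi}), and the factor $\Delta T$ makes the accumulated growth over the $K\approx T/\Delta T$ steps bounded. The remaining obstruction is that the free part's energy inequality carries a constant $C>1$, so the naive induction gives $B_{n+1}\le CB_n+\cdots$ and hence $B_n\approx C^n$ after $n$ steps; this is fixed by Selberg's device of writing $\phi^{(0)}_{n+1}$ as a cascade of free waves, each launched from the Duhamel data of the previous interval, and strengthening the induction hypothesis to a bound on $\sup_{0\le t\le T}\|I^2\phi^{(0)}_n[t]\|_{H^{r-2s}}$. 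Without some substitute for these two ingredients your iteration does not close for any $(s,r)$ with $s<0$, and the threshold $r>s+\sqrt{s^2-s}$ — which arises from balancing the number of steps $K\approx N^{(-s+\varepsilon)/(r-2s-2\varepsilon)}$ against the per-step gain $N^{-r+2\varepsilon}$ — cannot be derived.
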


\begin{figure}[h]
   \centering
   \includegraphics[scale=.5]{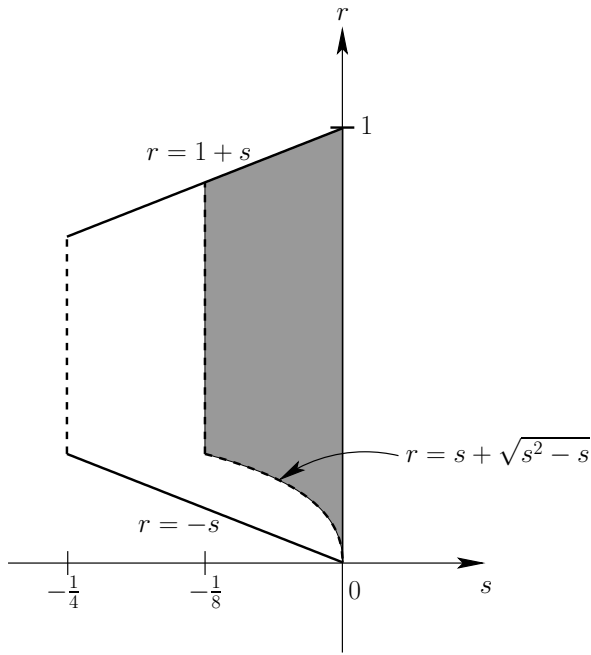}
   \caption{Global well-posedness of DKG holds in the interior of the shaded
region. Moreover, we can allow the line $r=1+s$ for $-1/8<s<0$. The
larger region which is contained in the strip $-1/4<s<0$ is where
Local well-posedness of DKG holds.}
   \label{fig:1}
\end{figure}

 The technique used here is the
theory of ``almost conservation law'' and ``$I$-method'' which was
developed by Colliander, Keel, Staffilani, Takaoka and Tao in a
series of papers; See for instance ~\cite{ckstt2001},
~\cite{ckstt2002}, ~\cite{ckstt2003}. The idea here is to apply a
smoothing operator $I$ to the solution of the PDE. The operator $I$
is chosen so that it is the identity for low frequencies and an
integration operator for high frequencies. The next step is to prove
an ``almost conservation law'' for the smoothed out solution as time
passes. Then one hopes that a modified version of LWP Theorem (after
$I$ is introduced) together with the ``almost conservation law''
will give a GWP result of the PDE for rough data.

In the DKG system, however, there is no conservation law for the
field $\phi$, only for the spinor $\psi$. Hence, we will not have
``almost conservation law'' for the $\phi$ field, which makes the
problem harder. To fix this problem we use a product estimate for
the Sobolev spaces for the inhomogeneous part of $\phi$, the
``almost conservation law'' for the spinor $\psi$, together with an
additional idea used by Selberg ~\cite{s2007} of making use of
induction argument involving a cascade of free waves.

 This paper is organized as follows. In the next section we fix
some notation, state definitions, and recall the derivation of the
conservation of charge. In Section 3 we shall state some basic
linear and bilinear estimates, and prove some null form estimates.
In Section 4 we discuss the $I$-method, state a modified LWP theorem
when we introduce the $I$ operator, state a key Lemma concerning
smoothing estimate,
  and show that
a combination of these imply  an ``almost conservation law'' for the
charge. Here, we also state another key Lemma which is used to
control the growth of solution of the Klein-Gordon part of DKG,
$\phi$. In Section 5 we put everything from section 4 together and
prove our main theorem. In Sections 6 and 7 we prove the two key
lemmas stated in section 4. In section 8 we prove the modified LWP
theorem.

\section{Preliminaries}

\subsection{Notation and Definitions}
In estimates, $C$ denotes a positive constant which can vary from
line to line and may depend on the Sobolev exponents $s$ and $r$ in
\eqref{data0}. We use the shorthand $X\lesssim Y $ for $X\le CY$,
and if $C\ll 1$ we use the symbol $\ll$ instead of $\lesssim$. We
 use the shorthand $X\approx Y$ for $Y\lesssim X\lesssim Y $.
 Throughout
the paper $\varepsilon$ is considered to be a sufficiently small
positive number in the sense that $0<\varepsilon\ll 1$. We also use
the notation
$$\angles{\cdot} = \sqrt{1 + \abs{\cdot}^2}.$$ The Fourier transforms
in space and space-time are defined by
$$
  \widehat f(\xi) = \int_{\R} e^{-ix\xi} f(x) \, dx,
  \qquad
  \widetilde u(\tau,\xi) = \int_{\R^{1+1}}
e^{-i(t\tau+x\xi)} u(t,x) \, dt \, dx.
$$
We denote $D = -i\partial_x$, so $\widehat{ D u}(\xi)= \xi \widehat
u(\xi)$. We also write $D_+:=\partial_t+\partial_x$ and
$D_-:=\partial_t-\partial_x$, hence $\square=-D_+D_-$.

We use the following spaces of Bourgain-Klainerman-Machedon type:
For $a, b \in \R$, define $X^{a,b}_\pm$, $H^{a,b}$ and $\mathcal
H^{a,b}$ to be the completions of $\mathcal S(\R^{1+1})$ with
respect to the norms
\begin{align*}
   \norm{u}_{X^{a,b}_\pm} &= \bignorm{\angles{\xi}^a
\angles{\tau\pm \xi}^b \widetilde u(\tau,\xi)}_{L^2_{\tau,\xi}},
   \\
   \norm{u}_{H^{a,b}} &= \bignorm{\angles{\xi}^a
\angles{\abs {\tau} - \abs{\xi}}^b \widetilde
u(\tau,\xi)}_{L^2_{\tau,\xi}},
  \\
  \norm{u}_{\mathcal H^{a,b}} &= \norm{u}_{H^{a,b}} +
\norm{\partial_t u}_{H^{a-1,b}}.
        \end{align*}
    We also need the
restrictions to a time slab $
  S_T = (0,T) \times \R$.  The
restriction $X_\pm^{a,b}(S_T)$ is a Banach space with norm
$$ \norm{u}_{X_\pm^{a,b}(S_T)} = \inf_{
\tilde{u}_{\mid {S_T}}=u } \norm{\tilde u}_{X_\pm^{a,b}}.$$ The
restrictions $H^{a,b}(S_T)$ and $\mathcal H^{a,b}(S_T)$ are defined
in the same way. See ~\cite{dfs2005} for more details about these
spaces.
\subsection{Rewriting DKG and Conservation of charge}
 To
see the symmetry in the DKG system, we shall rewrite \eqref{DKG1} as
follows:  Let $$\psi=
\begin{pmatrix}
     u \\
    v
   \end{pmatrix} $$ for $u, v\in \C$. Then we calculate
$$
(\gamma^0\partial_t +\gamma^1\partial_x)\psi=\begin{pmatrix}
     v_t-v_x \\
    u_t+u_x
   \end{pmatrix}
$$
and $$ \innerprod{\gamma^0 \psi}{\psi}_{\C^2}=\overline{u}
v+u\overline{v} =2\re(u\overline{v}).$$ Using this information, we
rewrite \eqref{DKG1} as
\begin{equation}\label{DKG2}
\left\{
\begin{aligned}
  & i(u_t +u_x)  =Mv - \phi v,\\
  & i(v_t -v_x)  =Mu - \phi u,
   \\
  &\square\phi=  m^2\phi -  2\re(u\overline{v}),
\end{aligned}
\right.
\end{equation}
with the initial data \eqref{data0} transformed to
\begin{equation}\label{data}
\left\{
\begin{aligned}
&u(0) = u_0 \in H^s, \qquad v(0)=v_0 \in H^s,\\
&\phi(0) = \phi_0 \in H^{r}, \qquad
\partial_t\phi(0)=\phi_1 \in
H^{r-1}.
\end{aligned}
\right.
\end{equation}
We shall then work with the Cauchy problem \eqref{DKG2},
\eqref{data} in the rest of the paper.

To motivate the derivation of the ``almost conservation law'', we
first recall the proof of the conservation of $L^2$-norm of the
solution to the Dirac part of the equation \eqref{DKG2}, using
integration by parts. To do this we first assume $u, v$ to be smooth
functions that decay at spatial infinity. For general well posed
solutions of \eqref{DKG2} where $s\ge 0$, the conservation of charge
will follow by a density argument.

Multiplying the first and second equations in \eqref{DKG2} by
$-i\overline u$ and $-i\overline v$, respectively, we get
\begin{equation*}
\left\{
\begin{aligned}
  & \overline u u_t + \overline u u_x  =-i M\overline
u v+ i \phi\overline
u v,\\
  & \overline v v_t -\overline v v_x  =-iMu\overline
v+i\phi u\overline v.
   \end{aligned}
\right.
\end{equation*}

Adding these two we obtain
$$
\overline u u_t + \overline v v_t+\overline u u_x-\overline v v_x
=2i(-M+\phi) \re(u\overline v).
$$
We now take the real part of this equation to get
$$
\re(\overline u u_t) + \re(\overline v v_t)+\re(\overline u
u_x)-\re(\overline v v_x )=0.
$$
Using the identity $( \overline u u)_t=\overline u u_t+\overline u_t
u= 2\re(\overline u u_t)$ (and the same identity if we take partial
derivative in $x$), we have
$$
(\abs{u}^2)_t+(\abs{v}^2)_t+(\abs{u}^2)_x-(\abs{v}^2)_x=0.
$$
We get after integrating in $x$
\begin{equation*}
\frac{d}{dt}\left(\norm{u(t)}_{L^2}^2+\norm{v(t)}_{L^2}^2\right)=0,
\end{equation*}
which implies the conservation charge:
\begin{equation}\label{conserv-charge}
\norm{u(t)}_{L^2}^2+\norm{v(t)}_{L^2}^2=\norm{u_0}_{L^2}^2+\norm{v_0}_{L^2}^2.
\end{equation}

\section{Linear and bilinear estimates}
The representation formula in Fourier space for the inhomogeneous
Dirac Cauchy problem
\begin{equation}\label{Cp:lin-Dirac}
\left\{
\begin{aligned}
&iD_\pm w_\pm = Mw_\pm+F_\pm(t,x) ,\\
& w_\pm(0,x)=f_\pm(x),
\end{aligned}
\right.
\end{equation}
is given by
\begin{equation}\label{sol:lin-Dirac}
\widehat{ w_\pm(t)}(\xi)=e^{-i(M\pm\xi)t}\hat f_\pm(\xi)+\int_0^t
e^{-i(M\pm \xi)(t-t')}\widehat F_\pm(t',\xi)dt'.
\end{equation}
 Similarly, the representation formula in Fourier space  for the inhomogeneous
Klein-Gordon Cauchy problem
\begin{equation}\label{Cp:lin-KG}
\left\{
\begin{aligned}
&\square z = m^2z+F(t,x),\\
& z(0,x)=f(x),\quad  \partial_tz(0,x)=g(x),
\end{aligned}
\right.
\end{equation}
is given by
\begin{equation}\label{sol:lin-KG}
\widehat{z(t)}(\xi)=\cos(t\angles{\xi}_m)\widehat{f}(\xi)+\frac{\sin(t\angles{\xi}_m)}{\angles{\xi}_m}\widehat{g}(\xi)
+\int_0^t
\frac{\sin\left((t-t')\angles{\xi}_m\right)}{\angles{\xi}_m}\widehat{F(t')}(\xi)dt',
\end{equation}
 where $\angles{\xi}_m = \sqrt{m^2 + \abs{\xi}^2}$.
\subsection{Linear estimates}
Throughout the paper, we use the notation
$$
\norm{ z[ t] }_{H^a}\equiv \norm{ z( t) }_{H^{a}}+ \norm{
\partial_t z(t) }_{H^{a-1}}.
$$
From the solution formulas \eqref{sol:lin-Dirac} and
\eqref{sol:lin-KG} we deduce the following energy estimates for the
solution of Cauchy problems \eqref{Cp:lin-Dirac} and
\eqref{Cp:lin-KG}, respectively:
\begin{align}
\label{Est:lin-Dirac} &\norm{w_\pm(t)}_{H^a}\le\norm{f_\pm}_{H^a}+
\int_0^t
\norm{F_\pm(t')}_{H^a}dt',\\
 \label{Est:lin-KG}
&\norm{z[t]}_{H^a}\le C\left(\norm{f}_{H^a}+
\norm{g}_{H^{a-1}}+\int_0^t \norm{F(t')}_{H^{a-1}}dt'\right),
\end{align}
\footnote[1]{If we set $m=0$ in \eqref{Cp:lin-KG}, then the constant
$C$ in the energy estimate \eqref{Est:lin-KG} will depend on $t$.}
for some $C>0$ and for all $t>0$.

 The estimates we present in the following two lemmas
 are a priori estimates for the solutions of
 the massive Dirac and Klein-Gordon Cauchy problems, and
  they are crucial for the reduction of the
local existence problem to bilinear estimates. These estimates are
 variants of the estimates in \cite[Lemma~5, Lemma~6]{dfs2005}, i.e., when $M=m=0$.
 \begin{lemma}\label{DEElemma}
Let $1/2<b\le 1$, $a\in \R$, $0<T\le 1$ and  $0\le \delta\le 1-b$.
Then for all data $F_\pm\in {X^{a,b-1+ \delta}_\pm}(S_T)$ and
$f_\pm\in H^a$, we have the following estimate for the solution
\eqref{sol:lin-Dirac} of the Dirac Cauchy problem
\eqref{Cp:lin-Dirac}:
\begin{equation}\label{Apriori-Dirac}
\norm{w_\pm}_{X^{a,b}_\pm(S_T) }\le C \Bigl( \norm{f_\pm}_{H^a}+ T^
\delta \norm{F_\pm}_{X^{a,b-1+ \delta}_\pm(S_T)}\Bigr),
\end{equation}
where $C$ depends only on $b$.
\end{lemma}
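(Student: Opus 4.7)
The plan is to adapt the standard Bourgain-type arguments for the massless Dirac case treated in \cite[Lemma~5]{dfs2005}: combine an energy estimate for the free propagator, a Duhamel energy estimate, and a time-localization lemma that supplies the $T^\delta$ gain.

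First I would reduce to the massless Dirac equation by writing $\widetilde w_\pm = e^{iMt} w_\pm$ and $\widetilde F_\pm = e^{iMt} F_\pm$; the new pair solves $iD_\pm \widetilde w_\pm = \widetilde F_\pm$, and the substitution $\tau \mapsto \tau - M$ on the space-time Fourier side sends the weight $\angles{\tau \pm \xi}$ to $\angles{\tau \pm \xi - M}$, which is comparable up to constants. Once $M=0$, I would split the representation \eqref{sol:lin-Dirac} into its homogeneous and Duhamel parts. For the homogeneous piece, a fixed cut-off $\chi \in C_c^\infty(\R)$ with $\chi \equiv 1$ on $[0,1]$ produces an extension $\chi(t)\, e^{\mp itD} f_\pm$ of $e^{\mp itD}f_\pm|_{S_T}$, and a direct Plancherel computation together with the change of variable $s = \tau \pm \xi$ delivers the bound $\bignorm{\chi(t)\,e^{\mp itD}f_\pm}_{X^{a,b}_\pm} \le C_b \norm{f_\pm}_{H^a}$. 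For the Duhamel piece, the standard argument — splitting the region $\{|\tau \pm \xi|\le 1\}$ via a Taylor expansion in $t$ (which uses $b \le 1$) and the region $\{|\tau \pm \xi|>1\}$ via the trivial bound $|\tau \pm \xi|^{-1}\le 1$ — yields the cut-off estimate
$$
\bignorm{\chi(t)\int_0^t e^{\mp i(t-t')D}\widetilde F_\pm(t')\,dt'}_{X^{a,b}_\pm} \le C_b\,\bignorm{\widetilde F_\pm}_{X^{a,b-1}_\pm}
$$
for any time-extension $\widetilde F_\pm$ of $F_\pm$.

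To harvest the $T^\delta$ factor I would invoke the time-localization lemma
$$
\bignorm{\eta(t/T)\,u}_{X^{a,b-1}_\pm} \le C_b\,T^\delta\, \bignorm{u}_{X^{a,b-1+\delta}_\pm}, \qquad 0 \le \delta \le 1-b,
$$
valid because both $b-1$ and $b-1+\delta$ lie in $(-1/2,1/2)$, with $\eta \in C_c^\infty(\R)$ chosen so that $\eta(t/T) \equiv 1$ on $[0,T]$. Applying this with $\widetilde F_\pm = \eta(t/T)\,\widetilde F_\pm^{(0)}$ for a near-infimizing extension $\widetilde F_\pm^{(0)}$, and then taking the infimum over such extensions, yields the claimed inequality.

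The main technical hurdle I anticipate is the modulation-splitting step for the Duhamel term: near $\{|\tau \pm \xi|\le 1\}$ the factor $1/(\tau + M \pm \xi)$ appearing in the Fourier representation of the Duhamel integral becomes singular and must be absorbed via a Taylor expansion that trades smoothness in $t$ against the decay of $\hat\chi$, and it is precisely this step that both forces the upper bound $b \le 1$ and constrains the $\delta$-range. The mass $M$ itself, in contrast, is rendered harmless by the gauge reduction in the first step, and the time-localization estimate is by now routine.
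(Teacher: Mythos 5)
Your proposal is correct and follows essentially the same route as the paper: the mass is absorbed by observing that it only shifts the symbol by the bounded constant $M$, so that $\angles{\tau\pm\xi}\approx\angles{\tau\pm\xi\pm M}$, after which the standard massless estimate applies. The only cosmetic difference is that you gauge the mass away in time and re-derive the Bourgain-type homogeneous/Duhamel/time-localization estimates, whereas the paper shifts the spatial symbol ($\xi_M=\xi+M$) and simply cites \cite[Lemma~5]{dfs2005} for that step.
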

\begin{proof}
Define the space $X_{M,\pm}^{s,\theta}$ with a norm
$$
\norm{u}_{X^{s,\theta}_{M,\pm}} = \bignorm{\angles{\xi}^s
\angles{\tau\pm \xi_M}^\theta \widetilde
u(\tau,\xi)}_{L^2_{\tau,\xi}},
$$
where $\xi_M=\xi+M$. In view of \cite[Lemma~5]{dfs2005} the estimate
\eqref{Apriori-Dirac} holds if we replace the space
$X_\pm^{s,\theta}$ by $X_{M,\pm}^{s,\theta}$. So, to complete the
proof it suffices to show
$$X_\pm^{s,\theta} =X_{M,\pm}^{s,\theta} .$$ This reduces to proving
\begin{equation}\label{X-XM}
\angles{\tau\pm \xi}\approx\angles{\tau\pm \xi_M}.
\end{equation}
But this follows from
$$
\angles{\tau\pm \xi} \approx 1+ \abs{\tau\pm \xi}\le 1+ \abs{\tau\pm
\xi_M }+M\lesssim\angles{\tau\pm\xi_M },
$$
and conversely, $$ \angles{\tau\pm\xi_M }\approx 1+ \abs{\tau\pm
\xi_M }\le 1+ \abs{\tau\pm \xi}+M\lesssim\angles{\tau\pm\xi}.
$$
\end{proof}

\begin{lemma}\label{KGEElemma}
Let $1/2<b\le 1$, $a\in \R$, $0<T\le 1$ and  $0\le \delta \le 1-b$.
Then for all data $F\in {H^{a-1,b-1+\delta}}(S_T)$, $f\in H^a$ and
$g \in H^{a-1}$, we have the following estimate for the solution
\eqref{sol:lin-KG} of the Klein-Gordon Cauchy problem
\eqref{Cp:lin-KG}: \begin{equation}\label{Apriori-KG}
\norm{z}_{\mathcal{H}^{a,b}(S_T)}\le C \Bigl( \norm{f}_{H^a}+
\norm{g}_{H^{a-1}}+ T^{ \delta/2}\norm{F}_{H^{a-1,b-1+
\delta}(S_T)}\Bigr), \end{equation} where $C$ depends only on $b$.
\end{lemma}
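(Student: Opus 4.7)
My plan is to mimic the proof of Lemma \ref{DEElemma}: pass to a mass-adapted variant of the modulation space, invoke the massless estimate from \cite[Lemma~6]{dfs2005}, and then verify that the mass-adapted space coincides with $\mathcal H^{a,b}$ with equivalent norm. Concretely, I would introduce
$$
\norm{u}_{H^{a,b}_m} = \bignorm{\angles{\xi}^a \angles{\abs{\tau} - \angles{\xi}_m}^b \widetilde u(\tau,\xi)}_{L^2_{\tau,\xi}},
\qquad
\norm{u}_{\mathcal H^{a,b}_m} = \norm{u}_{H^{a,b}_m} + \norm{\partial_t u}_{H^{a-1,b}_m},
$$
which are the natural modulation spaces for \eqref{Cp:lin-KG} since the dispersion relation of the equation $\square z = m^2 z + F$ is $\tau = \pm\angles{\xi}_m$.

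First, I would observe that the proof of \cite[Lemma~6]{dfs2005} applies \emph{verbatim} to the massive Klein-Gordon Cauchy problem once the half-wave factor $e^{\pm it\abs{\xi}}$ in the representation formula is replaced by $e^{\pm it\angles{\xi}_m}$, since the argument only uses the algebraic form of the solution formula \eqref{sol:lin-KG} together with standard $X^{s,b}$ machinery. This yields the estimate \eqref{Apriori-KG} with $\mathcal H^{a,b}$ and $H^{a-1,b-1+\delta}$ replaced by $\mathcal H^{a,b}_m$ and $H^{a-1,b-1+\delta}_m$, respectively.

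Second, I would establish the norm equivalence $\mathcal H^{a,b} = \mathcal H^{a,b}_m$ by proving the pointwise weight comparison
$$
\angles{\abs{\tau} - \abs{\xi}} \approx \angles{\abs{\tau} - \angles{\xi}_m}.
$$
This reduces to the bound $0 \le \angles{\xi}_m - \abs{\xi} = \dfrac{m^2}{\angles{\xi}_m + \abs{\xi}} \le m$ valid for all $\xi \in \R$, which gives
$$
\bigabs{\abs{\tau} - \angles{\xi}_m} \le \bigabs{\abs{\tau} - \abs{\xi}} + m, \qquad \bigabs{\abs{\tau} - \abs{\xi}} \le \bigabs{\abs{\tau} - \angles{\xi}_m} + m,
$$
and hence the desired equivalence (with constants depending only on $m$) after applying $\angles{\cdot}$. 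Combining these steps delivers \eqref{Apriori-KG}.

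The only nontrivial step is the first, namely to check that the massless argument of \cite{dfs2005} transfers to the massive case; however, this is bookkeeping rather than a new idea. The norm-equivalence step is essentially identical to the verification of \eqref{X-XM} in Lemma \ref{DEElemma}, with the mass shift $\xi \mapsto \xi + M$ replaced by the nonlinear shift $\abs{\xi} \mapsto \angles{\xi}_m$. I do not expect any serious obstacle.
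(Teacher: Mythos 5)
Your proposal is correct and follows essentially the same route as the paper: introduce the mass-adapted spaces $H^{a,b}_m$, $\mathcal H^{a,b}_m$ built on the weight $\angles{\abs{\tau}-\angles{\xi}_m}$, invoke the known a priori estimate in those spaces (the paper cites \cite[Theorem~12]{s99} rather than transferring \cite[Lemma~6]{dfs2005}, but this is only a difference in sourcing), and conclude by the weight equivalence $\angles{\abs{\tau}-\abs{\xi}}\approx\angles{\abs{\tau}-\angles{\xi}_m}$ via $0\le\angles{\xi}_m-\abs{\xi}\le m$. Your triangle-inequality version of the weight comparison is in fact slightly cleaner than the paper's case split on the sign of $\tau$.
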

\begin{proof}
Define the space $H_m^{s,\theta}$ with a norm
$$
\norm{u}_{H^{s,\theta}_m} = \bignorm{\angles{\xi}^s \angles{\abs
{\tau} - \angles{\xi}_m}^\theta \widetilde
u(\tau,\xi)}_{L^2_{\tau,\xi}},
$$
and the space $\mathcal{H}_m^{s,\theta}$ with a norm
$$
\norm{u}_{\mathcal {H}_m^{s,\theta}} = \norm{u}_{H_m^{s,\theta}} +
\norm{\partial_t u}_{H_m^{s-1,\theta}}.
$$
So, in view of \cite[Theorem~12]{s99} the estimate
\eqref{Apriori-KG} holds if we replace the spaces $H^{s,\theta}$ and
$\mathcal{H}^{s,\theta}$ by $H_m^{s,\theta}$ and
$\mathcal{H}_m^{s,\theta}$, respectively. Hence, to complete the
proof it suffices to show
$$H^{s,\theta} =H_m^{s,\theta} .$$ This reduces to proving
\begin{equation}\label{H-Hm}
\angles{\abs {\tau} - \abs{\xi}}\approx\angles{\abs {\tau} -
\angles{\xi}_m}.
\end{equation}
Assume $\tau\ge 0$. Then
\begin{align*}
\angles{-\tau+\abs{\xi}} \approx 1+ \abs{-\tau+\abs{\xi}}&\le 1+
\abs{-\tau+\angles{\xi}_m }+ \angles{\xi}_m-\abs{\xi}\\
&\le 1+m+\abs{-\tau+\angles{\xi}_m }\\
&\lesssim\angles{-\tau+\angles{\xi}_m }.
\end{align*}
Conversely,
\begin{align*} \angles{ -\tau+\angles{\xi}_m }&\approx 1+
\abs{-\tau+\angles{\xi}_m }\\&\le 1+
\abs{-\tau+\abs{\xi}}+\angles{\xi}_m-\abs{\xi} \\
&=1+m+\abs{-\tau+\abs{\xi}}\lesssim\angles{-\tau+\abs{\xi}}.
\end{align*}
Similarly, it can be shown that the estimate \eqref{H-Hm} holds true
for $\tau< 0$. This completes the proof of the Theorem.
\end{proof}
We shall need the fact that if $ b>1/2$, then
\begin{align}\label{embxhc}
\norm{u(t)}_{H^a}\le C\norm{u}_{H^{a,b}(S_T)}\le
C\norm{u}_{X_\pm^{a,b}(S_T)} \qquad \text{for} \  0\le t\le T,
\end{align}
where $C$ depends only on $b$. The following estimate will also be
needed in the last section (see ~\cite{p2005} for the proof):
\begin{align}\label{Tfactor}
\norm{u}_{X_\pm^{a,\varepsilon}(S_T)}\le
CT^{1/2-2\varepsilon}\norm{u}_{X_\pm^{a,1/2-\varepsilon}(S_T)},
\end{align}
for all $\varepsilon>0$ sufficiently small, and $0<T\le 1$.

\begin{lemma}\label{Sob-emb-t} Let $2\le q\le\infty$ and $\varepsilon>0$ be sufficiently small. Then
$$
 \norm{u}_{H^{0,-1/2+1/q-\varepsilon}} \lesssim \norm{u}_{L_t^{q'} L_x^2}
$$
where $\frac1q+\frac{1}{q'}=1$.
\end{lemma}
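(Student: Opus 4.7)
The plan is to prove the estimate directly, using Hölder in the time-frequency variable, Hausdorff--Young in time, Minkowski's integral inequality, and Plancherel in space. No transfer principle is needed (and would in fact cause trouble here, since there is no time localization in the statement).

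Set $b = -1/2 + 1/q - \varepsilon$, so $b < -1/2 + 1/q$. For each fixed $\xi$, I would apply Hölder in $\tau$ to the weighted integrand $\angles{|\tau|-|\xi|}^{2b} |\widetilde u(\tau,\xi)|^2$ with conjugate exponents $q/2$ and $q/(q-2)$ (interpreted as $\infty$ and $1$ respectively when $q=2$, and as $1$ and $\infty$ when $q=\infty$) to get
\begin{equation*}
\int_{\R} \angles{|\tau|-|\xi|}^{2b} |\widetilde u(\tau,\xi)|^2 \, d\tau \le \biggl(\int_{\R} \angles{s}^{2bq/(q-2)} \, ds \biggr)^{(q-2)/q} \norm{\widetilde u(\cdot,\xi)}_{L^q_\tau}^{2}.
\end{equation*}
By translation invariance, the first factor is finite and independent of $\xi$ precisely when $-2bq/(q-2) > 1$, equivalently $b < -1/2 + 1/q$. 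This is exactly where the hypothesis $\varepsilon > 0$ enters.

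Next, since $\widetilde u(\cdot,\xi)$ is the Fourier transform in $t$ of $\widehat u(\cdot,\xi)$, the Hausdorff--Young inequality (valid in the range $1 \le q' \le 2 \le q \le \infty$) gives $\norm{\widetilde u(\cdot,\xi)}_{L^q_\tau} \lesssim \norm{\widehat u(\cdot,\xi)}_{L^{q'}_t}$. Integrating in $\xi$, combining with the previous bound, invoking Minkowski's integral inequality to exchange the $L^{q'}_t$ and $L^2_\xi$ norms (permitted because $q' \le 2$), and finally applying Plancherel in $x$ yields
\begin{equation*}
\norm{u}_{H^{0,b}} \lesssim \norm{\widetilde u}_{L^2_\xi L^q_\tau} \lesssim \norm{\widehat u}_{L^2_\xi L^{q'}_t} \le \norm{\widehat u}_{L^{q'}_t L^2_\xi} = \norm{u}_{L^{q'}_t L^2_x}.
\end{equation*}

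No step is individually deep; the only genuine obstacle is the convergence of the weight integral in $\tau$, which pins down the exact threshold $b < -1/2 + 1/q$ and explains why the $\varepsilon > 0$ slack in the hypothesis is sharp for this argument.
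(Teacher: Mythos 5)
Your proof is correct, but it follows a genuinely different route from the paper's. The paper first dualizes the estimate to $\norm{u}_{L_t^q L_x^2} \lesssim \norm{u}_{H^{0,\,1/2-1/q+\varepsilon}}$ and then obtains this by interpolating the Sobolev embedding in $t$, $\norm{u}_{L_t^\infty L_x^2} \lesssim \norm{u}_{H^{0,\,1/2+\varepsilon}}$, against the trivial $L^2_tL^2_x$ identity; the exponent arithmetic $b=\theta(1/2+\varepsilon)$, $1/q=(1-\theta)/2$ then lands just below the threshold. You instead work on the primal side and prove the estimate in one pass: H\"older in $\tau$ against the weight $\angles{\abs{\tau}-\abs{\xi}}^{2b}$ (where finiteness of $\int_\R \angles{s}^{2bq/(q-2)}\,ds$ is exactly the condition $b<-1/2+1/q$, and the reduction to a $\xi$-independent integral is legitimate after splitting $\tau\gtrless 0$), then Hausdorff--Young in $t$, Minkowski's inequality using $q'\le 2$, and Plancherel in $x$; your treatment of the endpoints $q=2$ and $q=\infty$ is also in order. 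The two arguments are of comparable length. Yours is self-contained and makes the sharp threshold $b<-1/2+1/q$ visible, avoiding any appeal to interpolation between the weighted-$L^2$ scale $H^{0,b}$ and the mixed-norm scale $L^q_tL^2_x$; the paper's is marginally quicker if one takes the $L^\infty_tL^2_x$ embedding and that interpolation machinery for granted, and it keeps the statement in the dual form in which such embeddings are usually quoted.
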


\begin{remark} This Lemma also holds if we replace $H^{0,-1/2+1/q-\varepsilon}$
by $X_\pm^{0,-1/2+1/q-\varepsilon}$, simply because
$H^{0,\alpha}\hookrightarrow X_\pm^{0,\alpha}$ for any $\alpha\le
0$.
\end{remark}
\begin{proof}[Proof of lemma \ref{Sob-emb-t}] By duality, the estimate is equivalent to
\begin{equation}\label{sobin-t}
  \norm{u}_{L_t^q L_x^2} \lesssim
\norm{u}_{H^{0,  1/2-1/q+\varepsilon}}.
\end{equation}
 By Sobolev embedding in $t$
$$ \norm{u}_{L_t^\infty L_x^2} \lesssim
\norm{u}_{H^{0,  1/2+\varepsilon}}.$$
 Interpolating this with
$$ \norm{u}_{L_t^2 L_x^2} =
\norm{u}_{L_t^2 L_x^2}$$ gives
$$ \norm{u}_{L_t^q L_x^2} \lesssim
\norm{u}_{H^{0,  b}}$$ where
$$\frac{1}{q}=\frac{\theta}{\infty}+\frac{1-\theta}{2}, \qquad
b= \theta(1/2+\varepsilon) $$ for $\theta\in [0,1]$. Thus
$b=\frac{1}{2}-\frac{1}{q}+\varepsilon(1-\frac{2}{q})<\frac{1}{2}-\frac{1}{q}+\varepsilon$,
and hence \eqref{sobin-t} follows. This concludes the proof of the
Lemma.
\end{proof}

\subsection{Bilinear estimates}
 We
shall need the standard product estimate for the Sobolev spaces $H^s
$, which reads as follows:
\begin{lemma}\label{Sob-emb-thm}
Suppose $a_1,a_2,a_3 \in \R$. Then
\begin{equation}\label{Sob-prodembed:1}
\norm{fg}_{H^{-a_3}}\lesssim \norm{f}_{ H^{a_1} }\norm{g}_{
H^{a_2}}.
  \end{equation}
provided
\begin{equation}\label{prodcond}
\begin{aligned}
      &a_1+a_2+a_3 > 1/2,\\
&  a_1+a_2 \ge 0, \quad  a_1+a_3 \ge 0, \quad a_2+a_3\ge 0.
\end{aligned}
\end{equation}
\end{lemma}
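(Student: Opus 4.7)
The plan is to pass to the Fourier side and reduce the claim to a weighted trilinear integral estimate. Testing \eqref{Sob-prodembed:1} against $h\in H^{a_3}$, applying Plancherel, and writing $F(\eta)=\angles{\eta}^{a_1}|\widehat f(\eta)|$, $G(\zeta)=\angles{\zeta}^{a_2}|\widehat g(\zeta)|$ and $H(\xi)=\angles{\xi}^{a_3}|\widehat h(\xi)|$ (each in $L^2$ with norm equal to the corresponding Sobolev norm), the claim becomes
\begin{equation*}
I := \iint_{\R^2} \frac{F(\eta)\,G(\zeta)\,H(\eta+\zeta)}{\angles{\eta}^{a_1}\angles{\zeta}^{a_2}\angles{\eta+\zeta}^{a_3}}\, d\eta\, d\zeta \lesssim \norm{F}_{L^2}\norm{G}_{L^2}\norm{H}_{L^2}
\end{equation*}
for nonnegative $F,G,H\in L^2(\R)$, after the change of variables $\zeta=\xi-\eta$.

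I would then split the domain of integration into three subregions according to which of the three frequencies $\abs{\eta}$, $\abs{\zeta}$, $\abs{\eta+\zeta}$ is the smallest. The triangle inequality forces the other two to be comparable; for instance, on the subregion where $\abs{\eta+\zeta}$ is smallest one has $\angles{\eta}\sim\angles{\zeta}$, so
\begin{equation*}
\frac{1}{\angles{\eta}^{a_1}\angles{\zeta}^{a_2}\angles{\eta+\zeta}^{a_3}} \lesssim \frac{1}{\angles{\zeta}^{a_1+a_2}\angles{\eta+\zeta}^{a_3}} \lesssim \frac{1}{\angles{\eta+\zeta}^{a_1+a_2+a_3}};
\end{equation*}
the first bound uses $\angles{\eta}\sim\angles{\zeta}$ (valid for any real exponent $a_1$), and the second uses the hypothesis $a_1+a_2\geq 0$ together with $\angles{\zeta}\gtrsim\angles{\eta+\zeta}$.

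Having reduced to estimating $\iint F(\eta)\,G(\zeta)\,H(\eta+\zeta)\,\angles{\eta+\zeta}^{-(a_1+a_2+a_3)}\, d\eta\, d\zeta$, I would set $\mu=\eta+\zeta$, bound the inner convolution integral $\int F(\eta)G(\mu-\eta)\,d\eta\leq\norm{F}_{L^2}\norm{G}_{L^2}$ via Cauchy--Schwarz in $\eta$, and then apply Cauchy--Schwarz once more in $\mu$:
\begin{equation*}
\int_\R \frac{H(\mu)}{\angles{\mu}^{a_1+a_2+a_3}}\, d\mu \leq \norm{H}_{L^2}\left(\int_\R\frac{d\mu}{\angles{\mu}^{2(a_1+a_2+a_3)}}\right)^{1/2} \lesssim \norm{H}_{L^2},
\end{equation*}
where finiteness of the last integral uses precisely $a_1+a_2+a_3 > 1/2$. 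The two remaining subregions (where $\abs{\eta}$ or $\abs{\zeta}$ is smallest) are handled symmetrically, using $a_2+a_3\geq 0$ and $a_1+a_3\geq 0$ respectively.

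The main obstacle is essentially bookkeeping: in each subregion one must verify that the weight can be redistributed so that the absorbing exponent $a_1+a_2+a_3$ ends up on the variable in which the single one-dimensional integral is carried out, and that no remaining factor is saddled with a negative exponent. This is exactly where the three hypotheses $a_i+a_j\geq 0$ play a role, each one handling a distinct subregion, while the strict inequality $a_1+a_2+a_3 > 1/2$ appears only at the end as the sharp integrability threshold for $\angles{\mu}^{-2(a_1+a_2+a_3)}$. No dispersive or refined harmonic analysis is needed; the whole estimate collapses to Cauchy--Schwarz and a single $1$D integral test.
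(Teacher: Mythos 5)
Your proof is correct and complete: the duality reduction, the trichotomy according to which of $\abs{\eta}$, $\abs{\zeta}$, $\abs{\eta+\zeta}$ is smallest (so the other two are comparable), the redistribution of weights using the pairwise conditions $a_i+a_j\ge 0$, and the final Cauchy--Schwarz with the integrability of $\angles{\mu}^{-2(a_1+a_2+a_3)}$ under $a_1+a_2+a_3>1/2$ is exactly the classical argument for this product law. The paper itself states this lemma without proof (it is the standard Sobolev product estimate, with the wave-Sobolev analogue cited to the references), so there is nothing to compare against; your argument is the expected one.
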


The following estimate is just the analogue of Lemma
\ref{Sob-emb-thm} for the wave-Sobolev space $H^{s,b}$.
\begin{lemma}~\cite{s99,st2007}\label{Thmembedding}.
Suppose $a_1,a_2,a_3 \in \R$ satisfy \eqref{prodcond}. Let $\alpha,
\beta, \gamma\ge 0$ and $\alpha+\beta+\gamma>\frac{1}{2}$. Then
\begin{equation}\label{prodembed:1}
\norm{wz}_{H^{-a_3,-\gamma}}\lesssim \norm{w}_{ H^{a_1, \alpha}
}\norm{z}_{ H^{a_2, \beta}}.
  \end{equation}
\end{lemma}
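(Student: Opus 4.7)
The plan is to convert the hyperbolic weights $\langle|\tau|-|\xi|\rangle^b$ into time integrability via the Sobolev-in-time embedding of Lemma~\ref{Sob-emb-t}, thereby reducing the wave-Sobolev product estimate to the purely spatial product estimate of Lemma~\ref{Sob-emb-thm}. A useful preliminary observation is that $\|u\|_{H^{s,b}}$ is monotone non-decreasing in $b$, so that decreasing $\alpha$ or $\beta$ strengthens the right-hand side while decreasing $\gamma$ strengthens the left-hand side. Consequently one may reduce to the binding regime $\alpha+\beta+\gamma=\tfrac12+\epsilon_0$ with $\alpha,\beta,\gamma\in[0,\tfrac12+\epsilon_0]$ for some small $\epsilon_0>0$.

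Next, I pick H\"older exponents $q_1,q_2\in[2,\infty]$ and $q_3'\in[1,2]$ with $\tfrac1{q_1}+\tfrac1{q_2}=\tfrac1{q_3'}$, satisfying (up to small $\epsilon$-corrections)
\[
\tfrac1{q_1}=\tfrac12-\alpha+\epsilon,\qquad \tfrac1{q_2}=\tfrac12-\beta+\epsilon,\qquad \tfrac1{q_3'}=\tfrac12+\gamma-\epsilon.
\]
Summing the first two and subtracting the third yields $\tfrac12-(\alpha+\beta+\gamma)+3\epsilon$, so the hypothesis $\alpha+\beta+\gamma>\tfrac12$ is precisely what allows one to choose $\epsilon$ small enough that the H\"older relation is consistent. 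At the boundary of the admissible region---say $\alpha=0$ forcing $q_1=2$, or $\alpha>\tfrac12$ forcing $q_1=\infty$---the endpoints of Lemma~\ref{Sob-emb-t} (including the trivial identity $L^2_tH^a_x=H^{a,0}$) apply without $\epsilon$-loss, so a consistent choice is always available.

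With these exponents fixed, the chain of inequalities runs as follows. First, by the dual of Lemma~\ref{Sob-emb-t} (applied componentwise after pulling out the spatial weight $\langle\xi\rangle^{-a_3}$),
\[
\|wz\|_{H^{-a_3,-\gamma}}\lesssim \|wz\|_{L^{q_3'}_tH^{-a_3}_x}.
\]
Next, the spatial product estimate of Lemma~\ref{Sob-emb-thm}, applied pointwise in $t$ under hypothesis~\eqref{prodcond}, together with H\"older's inequality in $t$, yields
\[
\|wz\|_{L^{q_3'}_tH^{-a_3}_x}\le \|w\|_{L^{q_1}_tH^{a_1}_x}\,\|z\|_{L^{q_2}_tH^{a_2}_x}.
\]
Finally, the forward direction of Lemma~\ref{Sob-emb-t} applied to each factor gives $\|w\|_{L^{q_1}_tH^{a_1}_x}\lesssim \|w\|_{H^{a_1,\alpha}}$ and $\|z\|_{L^{q_2}_tH^{a_2}_x}\lesssim \|z\|_{H^{a_2,\beta}}$, closing the argument.

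The hard part is not any individual step---each is a standard Sobolev embedding or H\"older inequality---but rather the bookkeeping of exponents at the degenerate values of $\alpha$, $\beta$, or $\gamma$ (namely $0$, $\tfrac12$, or values $>\tfrac12$), where the choice of $q_i$ collapses to an endpoint. In each such case one must invoke the matching endpoint of Lemma~\ref{Sob-emb-t}, and the admissibility of the whole scheme turns precisely on the strict inequality $\alpha+\beta+\gamma>\tfrac12$.
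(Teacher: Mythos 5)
The paper does not actually prove this lemma: it is quoted from \cite{s99,st2007}, so there is no in-paper argument to compare against. Judged on its own merits, your proposal is a correct and standard proof for this (non-sharp) range of exponents. The key structural point --- that the spatial conditions \eqref{prodcond} and the temporal condition $\alpha+\beta+\gamma>\tfrac12$ decouple completely once the hyperbolic weights are traded for mixed $L^q_t H^a_x$ norms via Lemma \ref{Sob-emb-t} and its dual --- is exactly right, and the three-step chain (dual embedding for the product, pointwise-in-$t$ spatial Sobolev product plus H\"older in $t$, forward embedding for each factor) closes. Two remarks on the details you partly defer. First, the preliminary reduction to $\alpha+\beta+\gamma=\tfrac12+\epsilon_0$ with all three exponents in $[0,\tfrac12+\epsilon_0]$ is not merely cosmetic: without it, e.g.\ $\alpha,\beta>\tfrac12$ would force $q_1=q_2=\infty$ and hence $q_3'=\infty$, and an $L^\infty_t$ bound cannot control the spacetime $L^2$-based norm on the left; the reduction is what guarantees $q_1,q_2\in[2,\infty]$ and $q_3'\in[1,2]$ simultaneously. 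Second, when one of $\alpha,\beta,\gamma$ vanishes the corresponding exponent must be taken to be exactly $2$ (the identity $H^{a,0}=L^2_tH^a_x$, with no $\varepsilon$-loss), and the single small parameter $\varepsilon$ must then be solved for from the exact H\"older relation $1/q_3'=1/q_1+1/q_2$, which is possible precisely because $\epsilon_0>0$; you flag this but it is worth stating that the choice is $\varepsilon\approx\epsilon_0/3$ (or $\epsilon_0/2$, $\epsilon_0$ in the degenerate cases). Also note that applying Lemma \ref{Sob-emb-t} ``after pulling out the spatial weight'' is legitimate only because $\angles{D_x}^{-a_3}$ is applied to the product $wz$ as a whole and commutes with the modulation weight --- it is not applied to the factors separately. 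Finally, the references \cite{s99,st2007} establish sharper versions of such product laws by dyadic decompositions exploiting the cone geometry; your H\"older--Sobolev argument cannot reach those sharper ranges, but it does suffice for the range stated in the lemma.
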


   The following comparison estimate between elliptic and hyperbolic
   weights proved in ~\cite{st2007} will be needed in the proof of
   Lemma \ref{lemmambedding:1} below. This estimate
is used to identify null structure in bilinear estimates.
\begin{lemma}\label{lemma-comparsion}
Denote
$$ \Gamma = \abs{\tau}-\abs{\xi},
  \qquad \Theta_+ = \lambda+\eta,
  \qquad \Sigma_- =\tau-\lambda-(\xi-\eta). $$
  Then
  $$\min(\abs{\eta},\abs{\xi-\eta})\lesssim
\max(\abs{\Gamma},\abs{\Theta_+},\abs{\Sigma_-} ).$$
\end{lemma}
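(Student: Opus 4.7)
The plan is to reduce the inequality to a single linear identity between the three weights, after which a brief case analysis on the signs of $\tau$ and $\xi$ finishes the job. Let $M = \max(\abs{\Gamma},\abs{\Theta_+},\abs{\Sigma_-})$ denote the right-hand side of the claimed bound; the goal is to exhibit one of $\abs{\eta}$ or $\abs{\xi-\eta}$ that is $\lesssim M$.

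The first step is to eliminate $\lambda$ from the definitions. Adding $\Theta_+$ and $\Sigma_-$ immediately gives the identity
$$
\Theta_+ + \Sigma_- \;=\; (\lambda+\eta) + \bigl(\tau-\lambda-\xi+\eta\bigr) \;=\; \tau - \xi + 2\eta,
$$
so $\abs{\tau - \xi + 2\eta} \le 2M$. Together with the obvious bound $\bigl|\,\abs{\tau}-\abs{\xi}\,\bigr| = \abs{\Gamma} \le M$, these are the only two inequalities I will need.

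Now I split into cases. If $\tau$ and $\xi$ have the same sign (or one vanishes), then $\abs{\tau-\xi} = \bigl|\,\abs{\tau}-\abs{\xi}\,\bigr| \le M$; subtracting this from the identity yields $2\abs{\eta}\le 3M$, so $\abs{\eta}\lesssim M$. If instead $\tau$ and $\xi$ have opposite signs, then it is $\abs{\tau+\xi} = \bigl|\,\abs{\tau}-\abs{\xi}\,\bigr|$ that is controlled by $M$; subtracting $\tau+\xi$ from $\tau-\xi+2\eta$ produces $2\abs{\xi-\eta}\le 3M$, so $\abs{\xi-\eta}\lesssim M$. In either case, $\min(\abs{\eta},\abs{\xi-\eta})\lesssim M$, which is exactly the assertion of the lemma.

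I do not anticipate any real obstacle. The only delicate point is the bookkeeping that relates $\bigl|\,\abs{\tau}-\abs{\xi}\,\bigr|$ to $\abs{\tau-\xi}$ in the same-sign case and to $\abs{\tau+\xi}$ in the opposite-sign case, but this is purely elementary. Conceptually the proof confirms the expected null-form heuristic: the simultaneous smallness of the elliptic weight $\Gamma$ and the two hyperbolic weights $\Theta_+$, $\Sigma_-$ forces one of the two input frequencies $\eta$ or $\xi-\eta$ to be small, which is precisely the cancellation mechanism that will be exploited in the bilinear estimates later.
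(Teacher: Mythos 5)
Your proof is correct. The paper does not actually prove this lemma --- it imports it from \cite{st2007} --- but your argument (eliminate $\lambda$ via the identity $\Theta_+ + \Sigma_- = \tau-\xi+2\eta$, then use $\abs{\Gamma}=\bigabs{\abs{\tau}-\abs{\xi}}$ to control $\abs{\tau-\xi}$ or $\abs{\tau+\xi}$ according to the relative signs of $\tau$ and $\xi$, yielding $2\abs{\eta}\le 3M$ or $2\abs{\xi-\eta}\le 3M$) is exactly the standard proof given there, with correct bookkeeping in both sign cases.
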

We now prove the following null form estimates. We remark that the
null structure of DKG in 1d is reflected in the difference of signs
in the r.h.s. of the estimate \eqref{reduc-embedding:1}, and the
difference of signs in the r.h.s. and l.h.s. of the estimates
\eqref{prodembed:-+} and \eqref{prodembed:+-} below; for equal signs
the estimates would fail.
\begin{lemma}\label{lemmambedding:1}
Let $b=\frac12+ \varepsilon$ for sufficiently small $\varepsilon>0$.
 The bilinear estimates
\begin{alignat}{2}
\label{reduc-embedding:1}
  \norm{wz}_{H^{-s_1,b-1}}\lesssim
\norm{w}_{ X_+^{s_2, b} }\norm{z}_{ X_-^{s_3,
 b}},
  \\
\label{prodembed:-+} \norm{wz}_{X_-^{-s_3,b-1}}\lesssim \norm{w}_{
H^{s_1,b} }\norm{z}_{ X_+^{s_2,
 b}},\\
\label{prodembed:+-} \norm{wz}_{X_+^{-s_3,b-1}}\lesssim \norm{w}_{
H^{s_1, b} }\norm{z}_{ X_-^{s_2,
  b}}
  \end{alignat}
  hold
  provided
  \begin{equation}\label{cond:embed1}
  \begin{split}
  &s_1+s_2+s_3> \varepsilon,\\
  &s_2+s_3\ge -1/2+\varepsilon,\\
& s_1+s_2\ge 0, \quad s_1+s_3\ge 0.
  \end{split}
  \end{equation}
\end{lemma}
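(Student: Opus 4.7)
The plan is to prove all three estimates by exploiting the null structure provided by Lemma \ref{lemma-comparsion}, combined with the non-null wave-Sobolev product estimate from Lemma \ref{Thmembedding}. I treat \eqref{reduc-embedding:1} as the prototype and handle \eqref{prodembed:-+}, \eqref{prodembed:+-} by the same scheme after dualization.

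Applying Plancherel's theorem, the estimate \eqref{reduc-embedding:1} is equivalent to a trilinear bound of the form
\[
\int \frac{\langle\xi\rangle^{-s_1}\langle\eta\rangle^{-s_2}\langle\xi-\eta\rangle^{-s_3}}{\Lambda_0^{1-b}\,\Lambda_1^{b}\,\Lambda_2^{b}}\,F(\tau,\xi)\,W(\lambda,\eta)\,Z(\tau-\lambda,\xi-\eta)\,d\lambda\,d\eta\,d\tau\,d\xi \lesssim \|F\|_{L^2}\|W\|_{L^2}\|Z\|_{L^2},
\]
with $\Lambda_0 = \langle|\tau|-|\xi|\rangle$, $\Lambda_1 = \langle\lambda+\eta\rangle$, $\Lambda_2 = \langle\tau-\lambda-(\xi-\eta)\rangle$. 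These are precisely the quantities $\Gamma, \Theta_+, \Sigma_-$ of Lemma \ref{lemma-comparsion}, which therefore gives $\min(|\eta|,|\xi-\eta|) \lesssim \max(\Lambda_0,\Lambda_1,\Lambda_2)$. Since \eqref{cond:embed1} is symmetric in $s_2, s_3$, I may assume $|\eta|\le|\xi-\eta|$, and so for any $\theta\in[0,1]$,
\[
\langle\eta\rangle^\theta \lesssim \Lambda_0^\theta + \Lambda_1^\theta + \Lambda_2^\theta.
\]

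I set $\theta = 1/2 - \varepsilon$ and multiply and divide the integrand by $\langle\eta\rangle^\theta$, splitting the trilinear form into three pieces according to which $\Lambda_j^\theta$ is used to bound $\langle\eta\rangle^\theta$. Each piece corresponds to a non-null wave-Sobolev product estimate: absorbing into $\Lambda_0^\theta$ produces $\|wz\|_{H^{-s_1,b-1+\theta}} \lesssim \|w\|_{H^{s_2+\theta,b}}\|z\|_{H^{s_3,b}}$; absorbing into $\Lambda_1^\theta$ produces $\|wz\|_{H^{-s_1,b-1}} \lesssim \|w\|_{H^{s_2+\theta,b-\theta}}\|z\|_{H^{s_3,b}}$; and absorbing into $\Lambda_2^\theta$ yields the symmetric companion of the latter. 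The embeddings $X^{a,b}_\pm \hookrightarrow H^{a,b}$ (valid for $b \ge 0$ since $\langle|\tau|-|\xi|\rangle \le \langle\tau\pm\xi\rangle$) convert the $X_\pm$ norms on the right-hand sides to $H$ norms, reducing each piece to Lemma \ref{Thmembedding}. A direct check with $\theta = 1/2 - \varepsilon$ shows that the hypotheses \eqref{prodcond} translate exactly to \eqref{cond:embed1}: $a_1+a_2+a_3 > 1/2$ becomes $s_1+s_2+s_3 > \varepsilon$; $a_1+a_2\ge 0$ becomes $s_2+s_3 \ge -1/2+\varepsilon$; and $a_1+a_3, a_2+a_3 \ge 0$ reduce to $s_1+s_2, s_1+s_3 \ge 0$. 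The hyperbolic condition $\alpha+\beta+\gamma > 1/2$ follows from $b > 1/2$ in each of the three subcases.

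For \eqref{prodembed:-+} and \eqref{prodembed:+-}, I would dualize to obtain a trilinear form whose three modulation weights, after relabeling of the Fourier integration variables, again fit the sign configuration of Lemma \ref{lemma-comparsion}, and repeat the above split. The main obstacle is the careful bookkeeping across the three subcases to ensure that the null gain $\theta = 1/2 - \varepsilon$ exactly converts \eqref{prodcond} into the sharper conditions \eqref{cond:embed1}; in particular, the assumption $s_2+s_3 \ge -1/2+\varepsilon$ is sharp because the null form gains precisely a $1/2$ derivative on the smaller of the two input frequencies. The argument crucially exploits the sign difference between the $+$ and $-$ weights noted in the statement of the lemma, without which the algebraic identity underlying Lemma \ref{lemma-comparsion} fails and the estimates break down.
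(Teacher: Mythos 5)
Your proposal is correct and follows essentially the same route as the paper: Plancherel/duality reduces each estimate to a trilinear convolution bound, Lemma \ref{lemma-comparsion} converts the minimum elliptic frequency into a modulation gain, and Lemma \ref{Thmembedding} finishes, with the exponent bookkeeping matching \eqref{cond:embed1}. The only cosmetic difference is that you extract a uniform gain $\theta=1-b$ from whichever modulation dominates, whereas the paper trades the full exponent of the dominant weight; both yield the same conditions, and your uniform choice in fact handles the $\langle\Sigma_-\rangle^{1-b}$ weight appearing after dualization of \eqref{prodembed:-+} and \eqref{prodembed:+-} without extra casework.
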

\begin{remark}\label{remark-prodest}
 The bilinear estimates
\eqref{reduc-embedding:1}--\eqref{prodembed:+-} will still hold if
we replace $z$ in the l.h.s. of the inequalities in these estimates
by $\overline z$. We also note that these bilinear estimates will
imply the corresponding estimates where the spaces are restricted in
time (refer ~\cite{dfs2005} for the detail).
\end{remark}

\begin{proof}[Proof of Lemma \ref{lemmambedding:1}]
We only prove \eqref{reduc-embedding:1} and \eqref{prodembed:-+},
since \eqref{prodembed:+-} will follow from \eqref{prodembed:-+} by
symmetry. We first prove \eqref{reduc-embedding:1}.

Set
\begin{align*}
& F_+(\lambda,\eta) = \angles{\eta}^{s_2}\angles{\lambda+\eta}^{b}
\abs{\widetilde w(\lambda,\eta)},
  \\
& G_-(\lambda,\eta) =\angles{\eta}^{s_3} \angles{\lambda-\eta}^{b}
  \abs{\widetilde z(\lambda,\eta)}.
\end{align*}
Then \eqref{reduc-embedding:1} is equivalent to
$$J\lesssim \norm{F_+}_{L^2}\norm{G_-}_{L^2}$$
where
$$J:=\norm{\int_{\R^{1+1} }
  \frac{
F_+(\lambda,\eta) G_-(\tau-\lambda,\xi-\eta)\, d\lambda \, d\eta  }
  { {\angles{\xi}^{s_1} \angles{\eta}^{s_2}
\angles{\xi-\eta}^{s_3}\angles{\Gamma}^{1-b}
   \angles{\Theta_+}^{b}
\angles{\Sigma_-}^{b}} }}_{L^2_{\tau,\xi}},
$$
where $\Gamma$, $\Theta_+$ and $ \Sigma_-$ are defined as in Lemma
\ref{lemma-comparsion}.

 By symmetry, we may assume $\abs{\eta} \le \abs{\xi-\eta}$. If
$\max(\abs{\Gamma},\abs{\Theta_+},\abs{\Sigma_-} )=\abs{\Gamma}$,
then in view of Lemma \ref{lemma-comparsion} the estimate for $J $
reduces to \eqref{prodembed:1} with exponents
$(a_1,a_2,a_3)=(s_2+1-b, s_3, s_1)$, \ $(\alpha,\beta,\gamma)=(b, b,
0)$. If $\max(\abs{\Gamma},\abs{\Theta_+},\abs{\Sigma_-}
)=\abs{\Theta_+}$ or $\abs{\Sigma_-}$, then the estimate for $J $
reduces to \eqref{prodembed:1} with exponents $(a_1,a_2,a_3)=(s_2+b,
s_3, s_1)$, \ $(\alpha,\beta,\gamma)=(0, b, 1-b)$ or $(b, 0, 1-b)$.

Then the conditions on $(a_1, a_2, a_3$), \eqref{prodcond}, will be
satisfied (for all the cases above) as long as \eqref{cond:embed1}
holds.

Next, we prove \eqref{prodembed:-+}. By duality, proving the
estimate \eqref{prodembed:-+} is equivalent to proving
\begin{equation}\label{prod-duality}
  \norm{wz}_{H^{-s_1,-b}}\lesssim
\norm{w}_{ X_+^{s_2, b} }\norm{z}_{ X_-^{s_3,
  1-b}},
\end{equation}
where $w, \ z$ are $\C$-valued functions. Define $F_+$ as before,
and redefine
 $G_-$ as
$$
G_-(\lambda,\eta)=\angles{\eta}^{s_3} \angles{\lambda-\eta}^{1-b}
  \abs{\widetilde z(\lambda,\eta)}.
$$ Then \eqref{prod-duality} is equivalent to
$$L\lesssim \norm{F_+}_{L^2}\norm{G_-}_{L^2}$$
where
$$L:=\norm{\int_{\R^{1+1} }
  \frac{
F_+(\lambda,\eta) G_-(\tau-\lambda,\xi-\eta)\, d\lambda \, d\eta  }
  { {\angles{\xi}^{s_1} \angles{\eta}^{s_2}
\angles{\xi-\eta}^{s_3}\angles{\Gamma}^{b}
   \angles{\Theta_+}^{b}
\angles{\Sigma_-}^{1-b}} }}_{L^2_{\tau,\xi}}.
$$
We use the same argument as in the estimate for $J$ above. In view
of Lemma \ref{lemma-comparsion} we can add $1-b$ to the exponent of
either the weight $\angles{\eta}$ or $\angles{\xi-\eta}$, at the
cost of giving up one of the hyperbolic weights $\angles{\Gamma}$,
$\angles{\Theta_+}$ or $\angles{\Sigma_-}$. Then we apply Lemma
\ref{Thmembedding}. In fact, we can reduce the estimate for $L$ to
\eqref{prodembed:1} with exponents $(a_1,a_2,a_3)=(s_2+1-b, s_3,
s_1)$ or $(s_2, s_3+1-b, s_1)$. Then the condition \eqref{prodcond}
is satisfied, since we assume \eqref{cond:embed1}.
\end{proof}

\section{I-Method and Almost Conservation Law}

Let $s< 0$ and $N \gg 1$ be fixed. Define the Fourier multiplier
operator

\begin{equation}
\label{Idefined} {{\widehat{If}}}( \xi ) = q( \xi ) {\widehat{f}} (
\xi ), \qquad q( \xi ) = \left\{ \begin{matrix}
1, & |\xi | < N, \\
N^{-s} {{|\xi |}^s} , & |\xi | >2N,
\end{matrix}
\right.
\end{equation}
with $q$ even, smooth and monotone.

Observe that on low frequencies $\{ \xi : |\xi | <N \},~I$ is the
identity operator. The operator $I$ commutes with differential
operators. We also have the following properties: For $a, b\in \R$,
\begin{align}
\label{Ibdd-1} &\norm{If}_{H^a}\lesssim\norm{f}_{H^a}, \quad \norm{Iw}_{H^{a,b}}\lesssim\norm{w}_{H^{a,b}},\\
\label{comparision:Iu,u}
&\norm{f}_{H^s}\lesssim\norm{If}_{L^2}\lesssim N^{-s}\norm{f}_{H^s},
\\
 \label{I2bdd}
&\norm{f}_{H^a}\lesssim \norm{I^2f}_{H^{a-2s}} \lesssim
N^{-2s}\norm{f}_{H^a},
\end{align}
and
 if $\supp \hat z(t,\cdot) \subset \{\xi: \abs{\xi}\gtrsim N\}$, we
 have $$\norm{I^{-1}z}_{H^{a, b}}\lesssim
N^s\norm{z}_{H^{a-s, b}},$$ which in turn implies
\begin{equation}
\label{compare-Iz:z} \norm{Iz}_{H^{a, b}}\lesssim
N^s\norm{I^2z}_{H^{a-s, b}}.
\end{equation}

 Let $(s, r)$ be such that $ - \frac16 < s < 0$ and $-s\le
r<\frac12+2s$. Then from the modified LWP theorem which we state in
the next section, there exists a $\Delta T>0$ depending on
$$
\norm{Iu_0}_{L^2}+ \norm{Iv_0}_{L^2}+ \norm{I^2\phi_0}_{H^{r-2s}}+
\norm{I^2\phi_1}_{H^{r-2s-1}}, $$ such that \eqref{DKG2},
\eqref{data} has solution for times  $0\le t\le \Delta T$. Of
course, \eqref{DKG2}, \eqref{data} has solution  for $(s,r)$ in a
larger region as in Theorem \ref{thm-lwp}, but now we reprove the
Theorem in the above restricted region with a different time of
existence of solution.

 Now, we observe using the Fundamental Theorem of Calculus
that
\begin{align*}
{{\| Iu ( \Delta T) \|}_{L^2}^2 } + {{\| Iv ( \Delta T) \|}_{L^2}^2
}&= {{\| Iu_0 \|}_{L^2}^2 } + {{\| Iv_0 \|}_{L^2}^2 } +R_1(\Delta
T)+ R_2(\Delta T),
\end{align*}
where
\begin{align*} &R_1(\Delta T)=\int_0^{\Delta T} \frac{d}{d\tau } ( Iu(
\tau) , Iu ( \tau
) ) d\tau, \\
&R_2(\Delta T)=\int_0^{\Delta T} \frac{d}{d\tau } ( Iv( \tau) , Iv (
\tau ) ) d\tau,
\end{align*}
and $(.,.)$ denotes the scalar product in $L^2$. By the first
equation in \eqref{DKG2},
\begin{align*} R_1(T)&=\int_0^{\Delta T} \frac{d}{d\tau } ( Iu(
\tau) ,
Iu ( \tau ) ) d\tau\\
&=2\re\int_0^{\Delta T}  ( I\dot u( \tau) , Iu ( \tau ) )
d\tau\\
&=2\re\int_0^{\Delta T}  ( I \left[-iMv+i\phi v-u_x\right](\tau) ,
Iu ( \tau )
) d\tau\\
&=2\re\int_0^{\Delta T}  ( -iMIv(\tau), Iu ( \tau ) )
d\tau+2\re\int_0^{\Delta T} (iI (\phi v)(\tau) , Iu ( \tau ) )
d\tau\\
 &\quad +2\re\int_0^{\Delta T}  ( -I u_x(\tau) , Iu ( \tau ) )
d\tau.
\end{align*}
But the third term is zero. Indeed,
\begin{align*} 2\re\int_0^{\Delta T}  ( -I u_x(\tau) ,
Iu ( \tau ) ) d\tau&=-2\re\int_0^{\Delta T}\int_\R   \overline{I
u_x}(\tau) Iu ( \tau )  dx d\tau\\
&=-\int_0^{\Delta T}\int_\R \left(\overline{I u}(\tau) Iu ( \tau )
\right)_x dx d\tau=0.
\end{align*}
Hence
\begin{align*}
R_1(\Delta T)&= 2\re\int_0^{\Delta T}\int_\R  -iMIv(\tau)\overline{
Iu ( \tau )} dx d\tau+2\re\int_0^{\Delta T}\int_\R iI (\phi v)(\tau)
\overline{Iu ( \tau )}dx d\tau.
\end{align*}
Similarly, by the second equation in \eqref{DKG2}
\begin{align*}
R_2(\Delta T)&=2\re\int_0^{\Delta T}\int_\R  -iMIu(\tau)\overline{
Iv ( \tau )} dx d\tau+2\re\int_0^{\Delta T}\int_\R iI (\phi u)(\tau)
\overline{Iv ( \tau )}dx d\tau.
\end{align*}
We therefore get
\begin{align*}
R(\Delta T):&= R_1(\Delta T)+R_2(\Delta T)\\&=2\re\int_0^{\Delta
T}\int_\R -2Mi\re(Iu(\tau)\overline{ Iv ( \tau )} ) dx
d\tau\\
&\quad+2\re\int_0^{\Delta T}\int_\R iI (\phi u)(\tau) \overline{Iv (
\tau )}dx d\tau+2\re\int_0^{\Delta T}\int_\R iI (\phi v)(\tau)
\overline{Iu ( \tau )}dx
d\tau\\
&=2\re\int_0^{\Delta T}\int_\R iI (\phi u)(\tau) \overline{Iv ( \tau
)}dx d\tau+2\re\int_0^{\Delta T}\int_\R iI (\phi v)(\tau)
\overline{Iu ( \tau )}dx d\tau.
\end{align*}
Now, observe that $$ -iI\phi Iu \overline{Iv}-iI\phi Iv
\overline{Iu}=-2iI\phi \re( Iu \overline{Iv}).$$ Using this identity
and the fact that $I\phi$ is real-valued (recall that the multiplier
$q$ is assumed to be even), we obtain
$$
2\re\left[\int_0^{\Delta T}\int_\R-iI\phi(\tau)
Iu(\tau)\overline{Iv}(\tau)+\int_0^{\Delta T}\int_\R-iI\phi(\tau)
Iv(\tau) \overline{Iu}(\tau)dx d\tau\right]=0.
$$

We can therefore add this term to $R(\Delta T)$ for free. We remark
that adding this term to $R(\Delta T)$ gives us a cancellation on
the dangerous interaction in frequencies, and this makes it possible
for proving some smoothing estimates. This in turn enables us to get
the desired \emph{almost conservation law} (see below for the
details). We can now write
\begin{align*}
R(\Delta T)&= 2\re\int_0^{\Delta T}\int_\R i\{I (\phi u)-I\phi
Iu\}(\tau) \overline{Iv ( \tau )}dx d\tau\\
&\quad +2\re\int_0^{\Delta T}\int_\R i\{I (\phi v)-I\phi Iv\}(\tau)
\overline{Iu ( \tau )}dx d\tau.\end{align*} We therefore conclude
\begin{equation}\label{conser-IuIv}
\norm{ Iu (\Delta T) }_{L^2}^2  + \norm{Iv (\Delta T) }_{L^2}^2 =
\norm{ Iu_0 }_{L^2}^2  + \norm{ Iv_0 }_{L^2}^2  + R(\Delta T).
\end{equation}

The quantity that could make $\norm{ Iu ( \Delta T) }_{L^2}^2 +
\norm{Iv ( \Delta T) }_{L^2}^2 $ too large in the future is
$R(\Delta T)$. The idea is then to use bilinear estimates to show
that locally in time $R(\Delta T)$ is small. By Plancherel and
Cauchy-Schwarz, we obtain
\begin{equation}\label{estimate:R}
\begin{split}
\abs{R(\Delta T)}&\lesssim \norm{I (\phi u)-I\phi Iu}_{X_-^{0,
-b}(S_{\Delta T})}\norm{ Iv}_{X_-^{0,
b}(S_{\Delta T})}\\
&+\norm{I (\phi v)-I\phi Iv}_{X_+^{0, -b}(S_{\Delta T})}\norm{
Iu}_{X_+^{0,b}(S_{\Delta T})},
\end{split}
\end{equation}
for $b \in \R$.

 We denote
$$
Q_I(f,g)=I (f g)-If \cdot Ig.
$$

\begin{lemma}{(Smoothing estimate).}\label{lemma-errorestimate} Suppose
\begin{equation}\label{cond-Qest}
-1/3< s<0, \qquad -s<r\le 1+2s.
\end{equation}
 Let $b=\frac12+
\varepsilon$ for sufficiently small $\varepsilon>0$ depending on
$s,r$. Then
\begin{align}
\label{Q:phi-u} &\norm{Q_I(\phi,u)}_{X_-^{0, -b}(S_{\Delta T})}\le
CN^{-r+2s +2\varepsilon} \norm{I^2 \phi}_{H^{r-2s, b}(S_{\Delta
T})}\norm{ Iu}_{X_+^{0,
b}(S_{\Delta T})},\\
\label{Q:phi-v} &\norm{Q_I(\phi,v)}_{X_+^{0, -b}(S_{\Delta T})}\le
CN^{-r+2s+2\varepsilon}\norm{ I^2\phi}_{H^{r-2s, b}(S_{\Delta T})}
  \norm{ Iv}_{X_-^{0, b}(S_{\Delta T})},
     \end{align}
where $C$ depends on $s$, $r$, $\varepsilon$, but not $N$ or $\Delta
T$ .
\end{lemma}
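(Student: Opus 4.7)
The plan is to expand $Q_I(\phi,u)$ on the Fourier side, observe the pointwise vanishing of its symbol at low frequencies, and reduce the estimate to the null-form bilinear estimate \eqref{prodembed:-+} of Lemma~\ref{lemmambedding:1} after a dyadic decomposition. By Plancherel,
\begin{equation*}
\widetilde{Q_I(\phi,u)}(\tau,\xi) = \int \bigl[q(\xi)-q(\eta)\,q(\xi-\eta)\bigr]\,\widetilde\phi(\lambda,\eta)\,\widetilde u(\tau-\lambda,\xi-\eta)\,d\lambda\,d\eta.
\end{equation*}
Setting $\Phi=I^2\phi$ and $U=Iu$, this takes the form $\int \mathcal M(\xi,\eta)\,\widetilde\Phi(\eta)\,\widetilde U(\xi-\eta)$ with
$\mathcal M(\xi,\eta) = [q(\xi)-q(\eta)q(\xi-\eta)]/[q(\eta)^{2}q(\xi-\eta)].$
The numerator vanishes identically on $\{|\eta|,|\xi-\eta|\le N/2\}$ (since $q\equiv 1$ there), so only frequency triples with at least one input of size $\gtrsim N$ contribute.

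Next I would perform a Littlewood--Paley decomposition $\phi=\sum_{N_1}\phi_{N_1}$, $u=\sum_{N_2}u_{N_2}$ and denote the output dyadic frequency by $|\xi|\sim N_3$. Four cases arise: (i) $N_1\gtrsim N$, $N_2\ll N_1$, so $N_3\sim N_1$; (ii) the symmetric case $N_2\gtrsim N$, $N_1\ll N_2$; (iii) the high-high-to-low interaction $N_1\sim N_2\gtrsim N$, $N_3\ll N_1$; and (iv) $N_1\sim N_2\sim N_3\gtrsim N$. In cases (i), (ii), (iv) a mean-value estimate applied to the numerator of $\mathcal M$ together with direct bookkeeping of $q(\eta)^{-2}q(\xi-\eta)^{-1}$ yields a pointwise bound of the form
\begin{equation*}
|\mathcal M(\xi,\eta)|\lesssim N^{-r+2s}\, \bigl(\max(N_1,N_2,N_3)\bigr)^{r-2s}\cdot\Bigl(\tfrac{\min(N_1,N_2)}{\max(N_1,N_2)}\Bigr)^{\!\alpha}
\end{equation*}
for some $\alpha>0$, which when combined with \eqref{prodembed:-+} applied to $\Phi_{N_1}$ and $U_{N_2}$ at exponents $(s_1,s_2,s_3)=(r-2s,\,0,\,0)$ gives the desired estimate in these three cases. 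Condition \eqref{cond:embed1} is then immediate from the hypothesis \eqref{cond-Qest}: $s_1+s_2+s_3 = r-2s > -3s > \varepsilon$, $s_2+s_3=0\ge -\tfrac12+\varepsilon$, and $s_1+s_j\ge 0$ for $j=2,3$, while the small loss $N^{2\varepsilon}$ is absorbed into the hyperbolic weights.

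The main obstacle will be case (iii), where the mean-value argument is ineffective and $\mathcal M$ carries the potentially large factor $1/(q(\eta)^{2}q(\xi-\eta))\sim (N_1/N)^{3|s|}$. Here the null structure encoded in the sign mismatch between $X_-$ (the output space) and $X_+$ (the space for $U$) is essential: Lemma~\ref{lemma-comparsion} allows us to trade one power of $\min(|\eta|,|\xi-\eta|)\sim N_1$ for one of the hyperbolic weights $\langle\Gamma\rangle$, $\langle\Theta_+\rangle$, $\langle\Sigma_-\rangle$, thereby exchanging part of the loss for available regularity on $\Phi$ or $U$. The condition $r>-s$ from \eqref{cond-Qest} provides precisely the slack needed to absorb $(N_1/N)^{3|s|}$ against the natural weight $N_1^{r-2s}$ built into the $I^2\phi$ norm, and the upper bound $r\le 1+2s$ keeps the hyperbolic weight we must pay within the range tolerated by \eqref{prodembed:-+}. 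Summation over dyadic triples then converges, producing the uniform factor $N^{-r+2s+2\varepsilon}$ and a constant independent of $\Delta T$.

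Finally, the second estimate \eqref{Q:phi-v} follows from \eqref{Q:phi-u} by swapping the roles of $+$ and $-$, justified by the companion estimate \eqref{prodembed:+-} and the conjugation symmetry noted in Remark~\ref{remark-prodest}.
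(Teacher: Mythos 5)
Your proposal follows essentially the same route as the paper's proof: Fourier-side expansion of the commutator symbol $q(\xi)-q(\eta)q(\xi-\eta)$, its vanishing on low/low interactions, a mean-value bound on the symbol for the mixed low/high cases, and reduction to the null-form product estimates \eqref{prodembed:-+}, \eqref{prodembed:+-} of Lemma~\ref{lemmambedding:1} (your dyadic decomposition is just a finer version of the paper's low/high split, and the symmetry argument for \eqref{Q:phi-v} is identical). The one place you diverge is your case (iii): the paper handles \emph{all} high/high interactions without any symbol-level cancellation or direct appeal to Lemma~\ref{lemma-comparsion}, simply applying the triangle inequality to $I(\phi_h u_h)$ and $I\phi_h\cdot Iu_h$ and then \eqref{prodembed:-+} with negative input regularities (e.g.\ $\norm{\phi_h}_{H^{-s+2\varepsilon,b}}\norm{u_h}_{X_+^{s,b}}$), so that the high-frequency localization of both factors alone produces $N^{-r+2s+2\varepsilon}$ — the same arithmetic your sketch ultimately relies on, but obtained more directly.
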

In order to apply the $I$-method, we need a variant of Theorem
\ref{thm-lwp}, which we call a modified LWP Theorem for the
$I$-modified equation
\begin{equation}\label{IDKG2}
\left\{
\begin{aligned}
  &iD_+(Iu)=MIu-I(\phi v),\\
  & iD_-(Iv) =MIv-I(\phi u),
   \\
  &\square(I^2\phi)=m^2I^2\phi -  2I^2(\re(u \overline v)),
\end{aligned}
\right.
\end{equation}
which is obtained from \eqref{DKG2} by applying $I$. The
corresponding $I$-initial data obtained from \eqref{data} are
\begin{equation}\label{Idata}
\left\{
\begin{aligned}
&Iu(0) = Iu_0 \in L^2, \qquad Iv(0)=Iv_0 \in L^2,\\
&I^2\phi(0) = I^2\phi_0 \in H^{r-2s}, \qquad \partial_t
I^2\phi(0)=I^2\phi_1 \in H^{r-2s-1}.
\end{aligned}
\right.
\end{equation}

Combining \eqref{conser-IuIv}, \eqref{estimate:R}, \eqref{Q:phi-u}
and \eqref{Q:phi-v} we obtain, for $s,r $  and  $\varepsilon$ as in
Lemma \ref{lemma-errorestimate} ,
\begin{equation}\label{almostconser-IuIv}
\begin{split}
&\norm{ Iu ( \Delta T) }_{L^2}^2  + \norm{Iv ( \Delta T) }_{L^2}^2\\
&\le \norm{ Iu_0 }_{L^2}^2  + \norm{ Iv_0 }_{L^2}^2\\
&\quad + CN^{-r+2s+2\varepsilon}\norm{I^2\phi}_{H^{r-2s,
b}(S_{\Delta T})}\norm{Iu}_{X_+^{0, b}(S_{\Delta T})}
\norm{Iv}_{X_-^{0,b}(S_{\Delta T})},
\end{split}
\end{equation}
where $C$ depends on $s$, $r$ and $\varepsilon$, but not $N$ or
$\Delta T$.

In view of \eqref{comparision:Iu,u} and \eqref{I2bdd}, we have
\begin{equation}\label{u0v0p0}
\begin{split}
 &\norm{ Iu_0 }_{L^2}+
\norm{ Iv_0 }_{L^2}\le AN^{-s}, \\
& \norm{ I^2\phi_0 }_{H^{r-2s}}+ \norm{I^2\phi_1}_{H^{r-2s-1}}\le BN^{-2s}, \\
\end{split}
\end{equation}
for some $A, B>0$. Here, $A$ depends on $ \norm{ u_0 }_{L^2}+ \norm{
v_0 }_{L^2}$ whereas $B$ depends on $\norm{ \phi_0 }_{H^r}+
\norm{\phi_1}_{H^{r-1}}$.

We now state the modified LWP theorem which will be proved in the
last section.
\begin{theorem}\label{theorem-modifiedlwp}
Suppose
\begin{equation}\label{cond-mlwp}
-\frac16<s<0, \quad  -s\le r< \frac12 + 2s,
\end{equation}
Let $b=\frac12+ \varepsilon$ for sufficiently small $\varepsilon>0$
depending on $s,r$.
 Assume also that $A$ and $B$ in \eqref{u0v0p0} are such that
\begin{equation}\label{AB-cond}
C(B+A^2)(N^{-2\varepsilon}+ N^{-r+2\varepsilon})\le 1.
\end{equation}
Then there exists
\begin{equation}\label{DeltaT}
\Delta T\approx N^{(s-\varepsilon)/(r-2s-2\varepsilon)}
\end{equation}
such that \eqref{DKG2}, \eqref{data} has a unique solution
$$(u,v,\phi) \in X_+^{s, b}(S_{\Delta T})\times X_-^{s, b}(S_{\Delta
T}) \times \mathcal H^{r, b}(S_{\Delta T})$$ on the time interval
$0\le t\le \Delta T$. Moreover,
\begin{align}\label{control:Iu+Iv}
&\norm{Iu}_{X_+^{0, b}(S_{\Delta T})}+
\norm{Iv}_{X_-^{0,b}(S_{\Delta T})}\le CAN^{-s},\\
\label{control:Iphi}
 &\norm{I^2\phi}_{\mathcal H^{r-2s, b}(S_{\Delta
T})}\le C(B+A^2)N^{-2s},
\end{align}
where $C$ depends on $s$, $r$ and $\varepsilon$, but not $N$ or
$\Delta T$.
\end{theorem}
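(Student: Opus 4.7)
The plan is a contraction-mapping argument for the $I$-modified system \eqref{IDKG2}, \eqref{Idata}. Applying $I$ to the Dirac equations and $I^2$ to the Klein--Gordon equation, the Duhamel representation together with the linear estimates \eqref{Apriori-Dirac}, \eqref{Apriori-KG} and the data bounds \eqref{u0v0p0} reduces the problem to setting up a self-map on the ball
\begin{equation*}
\mathcal B_T = \bigl\{ (u,v,\phi) \,:\, \|Iu\|_{X_+^{0,b}(S_T)} + \|Iv\|_{X_-^{0,b}(S_T)} \le C_0 AN^{-s},\ \|I^2\phi\|_{\mathcal H^{r-2s,b}(S_T)} \le C_0(B+A^2)N^{-2s} \bigr\}
\end{equation*}
with $T = \Delta T$ and $C_0$ a sufficiently large absolute constant. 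The self-map property then reduces to controlling $I(\phi v)$, $I(\phi u)$ in $X_\pm^{0,b-1+\delta}(S_T)$ and $I^2(u\overline v)$ in $H^{r-2s-1,b-1+\delta}(S_T)$, each with a $T^\delta$ or $T^{\delta/2}$ small factor in front.

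For the Dirac nonlinearities I would decompose $I(\phi v) = (I\phi)(Iv) + Q_I(\phi,v)$. The product $(I\phi)(Iv)$ is estimated by the null-form bilinear estimate \eqref{prodembed:+-} with exponents $(s_1,s_2,s_3) = (r,0,0)$ (admissible under \eqref{cond:embed1} since $r \ge -s > 0$), followed by the Fourier-weight comparison $\|I\phi\|_{H^{r,b}} \lesssim \|I^2\phi\|_{H^{r-2s,b}}$ obtained as in \eqref{I2bdd}, \eqref{compare-Iz:z}. The commutator term $Q_I(\phi,v)$ is handled by the smoothing estimate of Lemma \ref{lemma-errorestimate}, adapted from its $X_\pm^{0,-b}$ form to the required $X_\pm^{0,b-1+\delta}$ form by the same proof, which delivers the additional smallness factor $N^{-r+2s+2\varepsilon}$. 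The Klein--Gordon nonlinearity is treated by first using \eqref{I2bdd} to pull the $I^2$ outside the product, $\|I^2(u\overline v)\|_{H^{r-2s-1,\cdot}} \lesssim N^{-2s}\|u\overline v\|_{H^{r-1,\cdot}}$, then applying \eqref{reduc-embedding:1} with $(s_1,s_2,s_3)=(1-r,s,s)$ (which closes precisely because $r < \frac12+2s$ in \eqref{cond-mlwp}) together with Remark \ref{remark-prodest} for the conjugate, and finally the elementary bound $\|u\|_{X_+^{s,b}} \lesssim \|Iu\|_{X_+^{0,b}}$ coming from the fact that $\angles{\xi}^s \lesssim q(\xi)$ everywhere.

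Plugging these bounds back into \eqref{Apriori-Dirac}, \eqref{Apriori-KG} gives a closed coupled system of inequalities in $\alpha := \|Iu\|_{X_+^{0,b}(S_T)} + \|Iv\|_{X_-^{0,b}(S_T)}$ and $\beta := \|I^2\phi\|_{\mathcal H^{r-2s,b}(S_T)}$. The hypothesis \eqref{AB-cond} absorbs the quadratic-in-$A$ loss, and choosing $T = \Delta T$ so that the $T^\delta$ and $T^{\delta/2}$ factors exactly dominate the worst powers of $N$ arising from the bilinear estimates forces $\Delta T \approx N^{(s-\varepsilon)/(r-2s-2\varepsilon)}$. Uniqueness and the contraction property follow from the same estimates applied to differences, using the multilinearity of all the bilinear bounds. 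The main technical obstacle is the Klein--Gordon nonlinearity $I^2(u\overline v)$, for which there is no ``product-plus-commutator'' splitting of $I^2$ yielding an additional smoothing factor analogous to the $N^{-r+2s+2\varepsilon}$ of the Dirac case; consequently the full $N^{-2s}$ loss from \eqref{I2bdd} must be absorbed by the $T^{\delta/2}$ factor of Lemma \ref{KGEElemma}, and it is this constraint that fixes the precise exponent $(s-\varepsilon)/(r-2s-2\varepsilon)$ appearing in \eqref{DeltaT}.
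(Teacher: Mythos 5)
Your overall architecture (Duhamel plus iteration/boot-strap, frequency decomposition, null-form bilinear estimates, \eqref{compare-Iz:z}-type weight comparisons) matches the paper's Section 8, and several of your individual reductions are sound (e.g.\ $\angles{\xi}^s\lesssim q(\xi)$, the choice of exponents in \eqref{reduc-embedding:1} for $u\overline v$). But there is a genuine gap in where the smallness in $\Delta T$ is supposed to come from. You propose to extract a uniform factor $T^{\delta}$ (resp.\ $T^{\delta/2}$) from the linear estimates by measuring the nonlinearities in $X_{\pm}^{0,b-1+\delta}$ (resp.\ $H^{r-2s-1,b-1+\delta}$). The main (non-commutator) Dirac term $(I\phi)(Iv)$ is then bounded by $\norm{I^2\phi}_{H^{r-2s,b}}\norm{Iv}_{X_-^{0,b}}\approx (B+A^2)N^{-2s}\cdot AN^{-s}$, which exceeds the target $AN^{-s}$ by the \emph{growing} factor $N^{-2s}$. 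To beat this with $T^{\delta}$ you need $T^{\delta}N^{-2s}\lesssim 1$, and since Lemma \ref{DEElemma} caps $\delta\le 1-b=\tfrac12-\varepsilon$, this forces $T\lesssim N^{4s}$ --- an $r$-independent bound that is strictly shorter than the claimed $\Delta T\approx N^{(s-\varepsilon)/(r-2s-2\varepsilon)}$ whenever $r-2s>1/4$ (which is allowed under \eqref{cond-mlwp}). So your scheme cannot produce \eqref{DeltaT}; and producing exactly \eqref{DeltaT} is essential, since the global iteration count $K=T/\Delta T$ in Section 5 is what generates the condition $r>s+\sqrt{s^2-s}$. Moreover, estimating the bilinear terms in the stronger norm $X^{0,b-1+\delta}$ is not covered by Lemma \ref{lemmambedding:1} as stated, and its proof degrades as $\delta$ grows because the output hyperbolic weight that gets traded for an elliptic one shrinks from $1-b$ to $1-b-\delta$.

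The paper's actual mechanism is different: no $T^{\delta}$ is taken from the Duhamel operator at all. The time factors come exclusively from the \emph{low/low} frequency interactions, where $I$ is the identity and there is no gain in $N$: for the Dirac nonlinearity one interpolates between the pure product estimate \eqref{uphi-ll1} and a H\"older-in-$t$/Sobolev bound \eqref{uphi-ll2} carrying $(\Delta T)^{1-2\varepsilon}$, with the interpolation parameter $\theta=(2r-4s-4\varepsilon)/(1-2\varepsilon)$ tuned so that the Sobolev exponent on $\phi_l$ lands exactly at $r-2s$; this yields $(\Delta T)^{2r-4s-4\varepsilon}$, and the balance $(\Delta T)^{2r-4s-4\varepsilon}\approx N^{2s-2\varepsilon}$ in the term $BN^{-2s}\Gamma_1\cdot AN^{-s}$ is precisely what fixes \eqref{DeltaT}. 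All high-frequency interactions are instead small in $N$ (factors $N^{-r+2s+2\varepsilon}$ or $N^{-1/2+2\varepsilon}$), with no time factor needed. Your closing claim that the exponent in \eqref{DeltaT} is dictated by absorbing the $N^{-2s}$ loss in $I^2(u\overline v)$ via $T^{\delta/2}$ is therefore incorrect on both counts: that term only needs $\Gamma_2=(\Delta T)^{1-4\varepsilon}+N^{-1/2+2\varepsilon}\ll 1$, and the $r$-dependence of $\Delta T$ originates in the Dirac low/low interaction, which your outline does not treat separately.
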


Combining \eqref{almostconser-IuIv}, \eqref{control:Iu+Iv} and
\eqref{control:Iphi} we conclude the following
 \emph{almost conservation
law}:
\begin{corollary}\label{Cor-almostconsv}
Let $s,r, \Delta T, \varepsilon, A, B, u$ and $v$ be as in Theorem
\ref{theorem-modifiedlwp}. Then
\begin{equation}\label{almostconser-IuIv:0N}
\norm{ Iu ( \Delta T) }_{L^2}^2  + \norm{Iv ( \Delta T) }_{L^2}^2
\le \norm{ Iu_0 }_{L^2}^2  + \norm{ Iv_0 }_{L^2}^2+
C(B+A^2)A^2N^{-r-2s+2\varepsilon}.
\end{equation}
\end{corollary}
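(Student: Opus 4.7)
The corollary is essentially bookkeeping: the inequality \eqref{almostconser-IuIv} already has the structure we need, and the two bounds \eqref{control:Iu+Iv}, \eqref{control:Iphi} provided by the modified LWP theorem control exactly the three factors appearing on its right side. So the plan is to substitute and count powers of $N$.

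More concretely, I would start from \eqref{almostconser-IuIv}, which we may invoke because the hypothesis \eqref{cond-mlwp} on $(s,r)$ in Theorem~\ref{theorem-modifiedlwp} is strictly stronger than the hypothesis \eqref{cond-Qest} used in Lemma~\ref{lemma-errorestimate}; in particular the smoothing estimates \eqref{Q:phi-u}--\eqref{Q:phi-v} and the resulting inequality \eqref{almostconser-IuIv} apply on the time slab $S_{\Delta T}$. Thus
\begin{equation*}
\norm{Iu(\Delta T)}_{L^2}^2 + \norm{Iv(\Delta T)}_{L^2}^2
\le \norm{Iu_0}_{L^2}^2 + \norm{Iv_0}_{L^2}^2
+ CN^{-r+2s+2\varepsilon}\,\mathcal{N}_1\mathcal{N}_2\mathcal{N}_3,
\end{equation*}
where $\mathcal{N}_1=\norm{I^2\phi}_{H^{r-2s,b}(S_{\Delta T})}$, $\mathcal{N}_2=\norm{Iu}_{X_+^{0,b}(S_{\Delta T})}$, $\mathcal{N}_3=\norm{Iv}_{X_-^{0,b}(S_{\Delta T})}$.

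Next, I would feed in the a priori controls supplied by Theorem~\ref{theorem-modifiedlwp}. The bound \eqref{control:Iphi} (together with $\norm{\cdot}_{H^{r-2s,b}}\le\norm{\cdot}_{\mathcal{H}^{r-2s,b}}$) gives $\mathcal{N}_1\le C(B+A^2)N^{-2s}$, while \eqref{control:Iu+Iv} gives $\mathcal{N}_2,\mathcal{N}_3\le CAN^{-s}$. Multiplying the three estimates together, the total power of $N$ accumulated from the nonlinear term is
\begin{equation*}
(-r+2s+2\varepsilon)+(-2s)+(-s)+(-s)=-r-2s+2\varepsilon,
\end{equation*}
and the constants combine into $C(B+A^2)A^2$. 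Inserting this back into the displayed inequality yields precisely \eqref{almostconser-IuIv:0N}.

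There is no substantive obstacle here: the corollary is a direct consequence of Lemma~\ref{lemma-errorestimate} and Theorem~\ref{theorem-modifiedlwp}, and all the real work (the null-form smoothing estimates in $Q_I$, and the modified LWP bounds on the $I$-modified DKG system) is assumed as input. The only thing to double-check is the arithmetic of the $N$-exponents and the compatibility of hypotheses (i.e., that \eqref{cond-mlwp} implies \eqref{cond-Qest}, so that the smoothing Lemma and the modified LWP Theorem can be applied simultaneously on the same time slab $S_{\Delta T}$).
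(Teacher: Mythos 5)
Your proposal is correct and matches the paper's own (one-line) argument: the corollary is obtained precisely by substituting \eqref{control:Iu+Iv} and \eqref{control:Iphi} into \eqref{almostconser-IuIv} and adding the exponents $(-r+2s+2\varepsilon)+(-2s)+(-s)+(-s)=-r-2s+2\varepsilon$. The only pedantic caveat is that \eqref{cond-mlwp} allows the endpoint $r=-s$ whereas \eqref{cond-Qest} requires $r>-s$ strictly, but this boundary case is irrelevant in the region \eqref{reducedcond-sr} where the corollary is actually used, and the paper glosses over it in exactly the same way.
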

As a consequence of this Corollary and \eqref{u0v0p0}, we obtain
\begin{equation}\label{almostconser-IuIv:N}
\norm{ Iu ( \Delta T) }_{L^2}^2  + \norm{Iv ( \Delta T) }_{L^2}^2
\le A^2N^{-2s}+ C(B+A^2)A^2N^{-r-2s+2\varepsilon}.
\end{equation}

We also need to control the growth of $I^2\phi$. To do so, we first
split $\phi$ into its homogeneous and inhomogeneous parts. Let
$\phi^{(0)}$ be solution of the homogenous Klein-Gordon Cauchy
problem
\begin{equation}\label{hom-eq-Iphi}
\left\{\begin{aligned}
&\left(\square -m^2\right)\phi^{(0)}=0\\
&\phi^{(0)}(0)=\phi_0, \quad
\partial_t\phi^{(0)}(0)=\phi_1.
\end{aligned}
\right.
\end{equation}
Then we write
$$
\phi= \phi^{(0)}+ \Phi
$$
where
\begin{equation}\label{Integ-Iphi:1}
\Phi= \left(\square-m^2\right)^{-1}(-2(\re(u\overline{v} ))).
\end{equation}
Here $\left(\square-m^2\right)^{-1}F$ denotes the solution of
$\left(\square-m^2\right) w = F$ with vanishing initial data.

The solution of the homogeneous Cauchy problem \eqref{hom-eq-Iphi}
in Fourier space is given by
\begin{equation}\label{soln-Iphi0}
\widehat{\phi^{(0)}(t)}(\xi)=\cos(t\angles{\xi}_m)\widehat{\phi_0}(\xi)+\frac{\sin(t\angles{\xi}_m)}{\angles{\xi}_m}\widehat{\phi_1}(\xi).
\end{equation}
Then by the energy estimate we have
\begin{equation}\label{energy-Iphi0}
\norm{ I^2\phi^{(0)}[ t] }_{H^{r-2s}}\le C(\norm{ I^2\phi_0
}_{H^{r-2s}}+ \norm{I^2\phi_1}_{H^{r-2s-1}}),
\end{equation}
for some $C>0$ and for all $t\ge 0$.

Now, consider the inhomogeneous part, \eqref{Integ-Iphi:1}. Since
the multiplier $q$ is assumed to be even, we obtain
$$I^2\re(u\overline v)=\re(I^2(u\overline v
))=\re(I(Iu\cdot I\overline v))+ \re(I Q_I( u, \overline v)). $$
Using this identity, we write
\begin{equation}\label{Inteq-Iphi:2}
I^2\Phi=\left (\square-m^2\right)^{-1}(-2\re(I(I u\cdot I \overline
v)))+(\square-m^2)^{-1}\left(-2\re( IQ_I( u, \overline v))\right).
\end{equation}
We then prove the following:
\begin{lemma}\label{lemma-almostconsve-phi}
 Suppose
\begin{equation}\label{cond-Iphi}
-1/4<s<0, \qquad 0<r<1/2+2s.
\end{equation}
Let $b=\frac12+ \varepsilon$ for sufficiently small $\varepsilon>0$
depending on $s,r$, and $\Delta T$ be as in Theorem
\ref{theorem-modifiedlwp}. Then
\begin{equation}\label{almostconser-Iphi}
\begin{split}
\norm{ I^2\Phi [ \Delta T] }_{H^{r-2s}}&\le C \Delta T(\norm{ Iu_0
}_{L^2}^2  + \norm{ Iv_0 }_{L^2}^2)\\
&\quad + C\Delta T N^{-r+2s+2\varepsilon}\norm{I^2\phi}_{H^{r-2s,
b}(S_{\Delta T})}\norm{Iu}_{X_+^{0, b}(S_{\Delta T})}
\norm{Iv}_{X_-^{0,b}(S_{\Delta T})} \\
&\quad + CN^{-1/2+2\varepsilon}\norm{Iu}_{X_+^{0, b}(S_{\Delta T})}
\norm{Iv}_{X_-^{0,b}(S_{\Delta T})} ,
\end{split}
\end{equation}
where $C$ depends on $s$, $r$, and $\varepsilon$, but not $N$ or
$\Delta T$.
\end{lemma}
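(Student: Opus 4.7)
\emph{Proof plan.} Following \eqref{Inteq-Iphi:2}, decompose $I^2\Phi = \Phi_1 + \Phi_2$, where $\Phi_1$ and $\Phi_2$ solve the Klein-Gordon Cauchy problem with vanishing data and forcing $-2\re(I(Iu\cdot I\bar v))$ and $-2\re(IQ_I(u,\bar v))$, respectively. Each piece is estimated by a distinct energy inequality, tailored to produce its contribution to the claim.

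For $\Phi_1$ I would apply the pointwise energy estimate \eqref{Est:lin-KG} to reduce matters to $\int_0^{\Delta T}\norm{I(Iu\cdot I\bar v)(t')}_{H^{r-2s-1}}\,dt'$. Since $|q|\le 1$, the operator $I$ is a contraction on every $H^a$; hence by the product estimate (Lemma \ref{Sob-emb-thm}) with $(a_1,a_2,a_3)=(0,0,1-r+2s)$ --- the condition $a_3>\tfrac12$ is precisely the hypothesis $r<\tfrac12+2s$ --- one obtains the pointwise bound $\norm{I(Iu\cdot I\bar v)(t')}_{H^{r-2s-1}}\lesssim \norm{Iu(t')}_{L^2}\norm{Iv(t')}_{L^2}$. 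AM-GM together with the almost conservation law \eqref{almostconser-IuIv}, applied at any intermediate time $t'\in[0,\Delta T]$ (the same derivation yielding \eqref{almostconser-IuIv} goes through with $t'$ in place of $\Delta T$), controls this by $\tfrac12(\norm{Iu_0}_{L^2}^2+\norm{Iv_0}_{L^2}^2)+\tfrac{C}{2}N^{-r+2s+2\varepsilon}\norm{I^2\phi}_{H^{r-2s,b}(S_{\Delta T})}\norm{Iu}_{X_+^{0,b}(S_{\Delta T})}\norm{Iv}_{X_-^{0,b}(S_{\Delta T})}$; integrating in $t'$ produces the first two terms of the claim.

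For $\Phi_2$ I would use the $\mathcal{H}^{a,b}$-energy estimate (Lemma \ref{KGEElemma}) with $\delta=0$, combined with the embedding \eqref{embxhc} applied componentwise to the two pieces of the $\mathcal{H}^{a,b}$ norm, to obtain $\norm{\Phi_2[\Delta T]}_{H^{r-2s}}\le C\norm{IQ_I(u,\bar v)}_{H^{r-2s-1,b-1}(S_{\Delta T})}$. The whole lemma then reduces to the commutator smoothing estimate
$$\norm{IQ_I(u,\bar v)}_{H^{r-2s-1,b-1}(S_{\Delta T})} \lesssim N^{-1/2+2\varepsilon}\norm{Iu}_{X_+^{0,b}(S_{\Delta T})}\norm{Iv}_{X_-^{0,b}(S_{\Delta T})}.$$

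This last bilinear estimate is the main obstacle. Its Fourier symbol $q(\xi)-q(\eta)q(\xi-\eta)$ vanishes unless $\max(|\xi|,|\eta|,|\xi-\eta|)\gtrsim N$, which by itself yields only a gain of order $N^{-|s|}$; reaching the full $N^{-1/2+2\varepsilon}$ requires exploiting the null structure of the Dirac-Dirac product $u\bar v$ through the comparison Lemma \ref{lemma-comparsion} and the null-form bilinear estimate \eqref{reduc-embedding:1}, in the same spirit as the proof of Lemma \ref{lemmambedding:1}. The sharpest case is when both $|\eta|,|\xi-\eta|\gtrsim N$ while the hyperbolic weights $\angles{\Gamma},\angles{\Theta_+},\angles{\Sigma_-}$ are simultaneously small: there the symbol smoothing alone falls short of the target, and only trading hyperbolic for elliptic weights via Lemma \ref{lemma-comparsion} --- which forces at least one of $|\eta|,|\xi-\eta|$ to be dominated by the maximum of these hyperbolic weights --- closes the estimate, under the hypotheses $-\tfrac14<s<0$ and $0<r<\tfrac12+2s$.
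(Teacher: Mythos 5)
Your plan is correct and follows the paper's proof essentially verbatim: the same splitting of $I^2\Phi$ via \eqref{Inteq-Iphi:2}, the same pointwise energy estimate plus Lemma \ref{Sob-emb-thm} and the almost conservation identity at intermediate times for the first piece, and the same reduction via Lemma \ref{KGEElemma} and \eqref{embxhc} to the commutator bound $\norm{Q_I(u,\bar v)}_{H^{r-2s-1,b-1}(S_{\Delta T})}\lesssim N^{-1/2+2\varepsilon}\norm{Iu}_{X_+^{0,b}}\norm{Iv}_{X_-^{0,b}}$ for the second. The paper closes that last estimate exactly as you anticipate — low/low vanishes, low/high uses the symbol bound \eqref{mvt-app} with $\theta=0$ followed by \eqref{reduc-embedding:1}, and high/high uses the triangle inequality and \eqref{reduc-embedding:1} with $-1/4$ derivatives placed on each factor (which is where $s>-1/4$ enters) — so the sketch is complete in substance.
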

 Then, by \eqref{u0v0p0}, \eqref{control:Iu+Iv}, \eqref{control:Iphi}
and \eqref{almostconser-Iphi} we conclude
\begin{corollary}\label{Cor-acl:Iphi-N} Let $A$,
$B$, $\Delta T$ be as in Theorem \ref{theorem-modifiedlwp} and $s,r,
\varepsilon$ be as in Lemma \ref{lemma-almostconsve-phi}. Then
\begin{equation}\label{almostconser-Iphi-N}
\norm{ I^2\Phi [ \Delta T] }_{H^{r-2s}}\le CA^2\left( \Delta
TN^{-2s} + (B+A^2)\Delta T N^{-r-2s+2\varepsilon}+
N^{-1/2-2s+2\varepsilon}\right).
\end{equation}
\end{corollary}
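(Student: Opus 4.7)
The plan is simply to substitute the a priori bounds from \eqref{u0v0p0}, \eqref{control:Iu+Iv}, and \eqref{control:Iphi} into the three terms on the right-hand side of the estimate \eqref{almostconser-Iphi} provided by Lemma \ref{lemma-almostconsve-phi}. Since the hypotheses on $s,r,\varepsilon$ from Lemma \ref{lemma-almostconsve-phi} are compatible with the hypotheses of Theorem \ref{theorem-modifiedlwp}, all of these estimates are available simultaneously on the time slab $S_{\Delta T}$.

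First I would handle the initial-data contribution: from \eqref{u0v0p0} we have $\norm{Iu_0}_{L^2}^2+\norm{Iv_0}_{L^2}^2 \le 2A^2 N^{-2s}$, so the first term in \eqref{almostconser-Iphi} is bounded by $C A^2 \Delta T\, N^{-2s}$. Next, for the middle term I would combine the bilinear estimate on $\norm{Iu}_{X_+^{0,b}(S_{\Delta T})}\norm{Iv}_{X_-^{0,b}(S_{\Delta T})} \le C A^2 N^{-2s}$ (which follows directly from \eqref{control:Iu+Iv}) with the bound $\norm{I^2\phi}_{H^{r-2s,b}(S_{\Delta T})} \le \norm{I^2\phi}_{\mathcal H^{r-2s,b}(S_{\Delta T})} \le C(B+A^2)N^{-2s}$ coming from \eqref{control:Iphi} and the definition of the $\mathcal H^{a,b}$ norm. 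Multiplying together, the middle term of \eqref{almostconser-Iphi} becomes
\begin{equation*}
C\Delta T\, N^{-r+2s+2\varepsilon}\cdot (B+A^2)N^{-2s}\cdot A^2 N^{-2s} \;=\; C A^2(B+A^2)\,\Delta T\, N^{-r-2s+2\varepsilon}.
\end{equation*}
Finally, for the last term in \eqref{almostconser-Iphi}, applying \eqref{control:Iu+Iv} once more gives the bound $C A^2 N^{-1/2-2s+2\varepsilon}$.

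Adding the three bounds and factoring out $CA^2$ yields
\begin{equation*}
\norm{I^2\Phi[\Delta T]}_{H^{r-2s}} \;\le\; CA^2\bigl(\Delta T\, N^{-2s} + (B+A^2)\Delta T\, N^{-r-2s+2\varepsilon} + N^{-1/2-2s+2\varepsilon}\bigr),
\end{equation*}
which is exactly the claim. There is no real obstacle in this corollary -- the content lies entirely in Lemma \ref{lemma-almostconsve-phi} and Theorem \ref{theorem-modifiedlwp}; here one only tracks the powers of $N$ carefully. The only small point to verify is that $\norm{I^2\phi}_{H^{r-2s,b}(S_{\Delta T})}$ is controlled by the $\mathcal H^{r-2s,b}(S_{\Delta T})$ norm appearing in \eqref{control:Iphi}, but this is immediate from the definition $\norm{u}_{\mathcal H^{a,b}} = \norm{u}_{H^{a,b}} + \norm{\partial_t u}_{H^{a-1,b}}$.
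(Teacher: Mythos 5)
Your proposal is correct and coincides with the paper's own (one-line) justification: the corollary is obtained precisely by substituting the bounds \eqref{u0v0p0}, \eqref{control:Iu+Iv} and \eqref{control:Iphi} into the three terms of \eqref{almostconser-Iphi}, and your power-of-$N$ bookkeeping checks out. Nothing further is needed.
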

By \eqref{u0v0p0} and \eqref{energy-Iphi0}, we also have
\begin{equation}\label{energy-Iphi0-N}
\norm{ I^2\phi^{(0)}[ t] }_{H^{r-2s}}\le CBN^{-2s},
\end{equation}
for some $C>0$ and for all $t\ge 0$.

\section{Proof of Theorem \ref{mainthm} }
We first remark that by propagation of higher regularity (see
 Remark 1.4 in ~\cite{s2007} for the detail on this argument), it
suffices to prove Theorem \ref{mainthm} for $r < 1/2+2s$. We
therefore fix $s$ and $r$ satisfying
\begin{equation}\label{reducedcond-sr}
-\frac18<s<0, \quad s+\sqrt{s^2-s}< r< \frac12+2s.
\end{equation}
Observe that this region is contained in the intersection of the
regions in \eqref{cond-Qest}, \eqref{cond-mlwp} and
\eqref{cond-Iphi}, so the statements made in Theorem
\ref{theorem-modifiedlwp}, Lemmas \ref{lemma-errorestimate} and
 \ref{lemma-almostconsve-phi}, Corollaries \ref{Cor-almostconsv} and
\ref{Cor-acl:Iphi-N}, \eqref{almostconser-IuIv:N} and
\eqref{almostconser-Iphi-N} hold true for $s,r$ satisfying
\eqref{reducedcond-sr}.

Global well-posedness of \eqref{DKG2}, \eqref{data} will follow if
we show well-posedness on $[0,T]$ for arbitrary $0<T<\infty$. We
have already shown in Theorem \ref{theorem-modifiedlwp} that
\eqref{DKG2}, \eqref{data} is well-posed on $[0,\Delta T]$, where
the size of $\Delta T$ is given by \eqref{DeltaT}. Now, we divide
the interval $[0, T]$ into subintervals of length $\Delta T$. Let
$K$ be the number of subintervals, so
\begin{equation}\label{K}
K=\frac{T}{\Delta T}\approx
N^{(-s+\varepsilon)/(r-2s-2\varepsilon)}.
\end{equation}
To reach the given time $T$, we need to advance the solution from
$\Delta T$ to $2\Delta T$ etc. up to $K\Delta T$, successively.

We shall use induction argument to show well-posedness of
\eqref{DKG2}, \eqref{data} up to time $T$. We denote the solution of
\eqref{DKG2}, \eqref{data} on the $n$-th subinterval $[(n-1)\Delta
T, n\Delta T]$, where $1\le n\le K$, by $(u_n, v_n, \phi_n)$. Now,
consider the DKG system
\begin{equation}\label{DKG2:n}
\left\{
\begin{aligned}
  &iD_+u_n=Mu_n-\phi_n v_n,\\
  & iD_-v_n =Mv_n-\phi_n u_n,
   \\
  &\square\phi_n=m^2\phi_n -  2\re(u_n \overline v_n).
\end{aligned}
\right.
\end{equation}
The initial data for this system at  $t=(n-1)\Delta T$ is specified
by the induction scheme
\begin{equation}\label{data:n}
\left\{
\begin{aligned}
&u_n((n-1)\Delta T)= u_{n-1}((n-1)\Delta T)\in H^s, \\
& v_n((n-1)\Delta T)=v_{n-1}((n-1)\Delta T)\in H^s,\\
&\phi_n((n-1)\Delta T)= \phi_{n-1}((n-1)\Delta T)\in H^{r},\\
&\partial_t \phi_n((n-1)\Delta T)=\partial_t \phi_{n-1}((n-1)\Delta
T) \in H^{r-1}.
\end{aligned}
\right.
\end{equation}
The corresponding $I$-system will be
\begin{equation}\label{IDKG2:n}
\left\{
\begin{aligned}
  &iD_+(Iu_n)=MIu_n-I(\phi_n v_n),\\
  & iD_-(Iv_n) =MIv_n-I(\phi_n u_n),
   \\
  &\square(I^2\phi_n)=m^2I^2\phi_n -  2I^2(\re(u_n \overline v_n)),
\end{aligned}
\right.
\end{equation}
with the $I$-initial data at $t=(n-1)\Delta T$:
\begin{equation}\label{Idata:n}
\left\{
\begin{aligned}
&Iu_n((n-1)\Delta T)= Iu_{n-1}((n-1)\Delta T)\in L^2, \\
& Iv_n((n-1)\Delta T)=Iv_{n-1}((n-1)\Delta T)\in L^2,\\
&I^2\phi_n((n-1)\Delta T)= I^2\phi_{n-1}((n-1)\Delta T)\in H^{r-2s},\\
&\partial_t I^2\phi_n((n-1)\Delta T)=\partial_t
I^2\phi_{n-1}((n-1)\Delta T) \in H^{r-2s-1}.
\end{aligned}
\right.
\end{equation}
Note that for $n=1$, this $I$-initial value problem corresponds to
\eqref{IDKG2}, \eqref{Idata}.

In the following estimates and the rest of this section we shall use
the notation $$S_{n\Delta T}=[(n-1)\Delta T, n\Delta T]\times \R .$$
Recall that $(u_n, v_n, \phi_n)$ is a solution of DKG on the $n$-th
subinterval $[(n-1)\Delta T, n\Delta T]$ for given data at
$t=(n-1)\Delta T$. Then in view of \eqref{almostconser-IuIv} we have
\begin{equation}\label{almostconser-IuIv-n}
\begin{split}
&\norm{ Iu_n ( n\Delta T) }_{L^2}^2  + \norm{Iv_n ( n\Delta T) }_{L^2}^2\\
&\le \norm{ Iu_n( (n-1)\Delta T)}_{L^2}^2  + \norm{ Iv_n( (n-1)\Delta T) }_{L^2}^2\\
&\quad + CN^{-r+2s+2\varepsilon}\norm{I^2\phi_n}_{H^{r-2s,
b}(S_{n\Delta T})}\norm{Iu_n}_{X_+^{0, b}(S_{n\Delta T})}
\norm{Iv_n}_{X_-^{0,b}(S_{n\Delta T})}.
\end{split}
\end{equation}
On the other hand, splitting $\phi_n$ into its homogeneous and
inhomogeneous parts, $\phi_n= \phi_n^{(0)}+\Phi_n$, we have in view
of \eqref{energy-Iphi0} and \eqref{almostconser-Iphi}
\begin{equation}\label{almostconser-Iphi-n}
\begin{split}
&\norm{ I^2\Phi_n [ n\Delta T] }_{H^{r-2s}}\\
&\le C \Delta T(\norm{ Iu_n( (n-1)\Delta T)}_{L^2}^2  + \norm{ Iv_n( (n-1)\Delta T) }_{L^2}^2)\\
&\quad + C\Delta T N^{-r+2s+2\varepsilon}\norm{I^2\phi_n}_{H^{r-2s,
b}(S_{n\Delta T})}\norm{Iu_n}_{X_+^{0, b}(S_{n\Delta T})}
\norm{Iv_n}_{X_-^{0,b}(S_{n\Delta T})} \\
&\quad + CN^{-1/2+2\varepsilon}\norm{Iu_n}_{X_+^{0, b}(S_{n\Delta
T})} \norm{Iv_n}_{X_-^{0,b}(S_{n\Delta T})},
\end{split}
\end{equation}
and
\begin{equation}\label{energy-Iphi0-n}
\sup_{0\le t\le T}\norm{ I^2\phi_n^{(0)}[ t] }_{H^{r-2s}}\le C\norm{
I^2\phi_n[(n-1)\Delta T] }_{H^{r-2s}}.
\end{equation}

Our induction hypotheses will be
\begin{align}\label{Indhyp:uv}
&\norm{ Iu_n ( (n-1)\Delta T) }_{L^2}  + \norm{Iv_n ( (n-1)\Delta
T) }_{L^2} \le A_nN^{-s},\\
\label{Indhyp:phi} &\norm{ I^2\phi_n [(n-1)\Delta T]}_{H^{r-2s}}\le
B_nN^{-2s},\end{align} for some $1\le n<K$, where $A_n$ and $B_n$
are independent of $N$. Again, at the first induction step, $n=1$,
\eqref{Indhyp:uv} and \eqref{Indhyp:phi} hold by \eqref{u0v0p0}.
Now, by Theorem \ref{theorem-modifiedlwp} we know that $(u_n, v_n,
\phi_n)$ solves \eqref{DKG2:n}, \eqref{data:n} on the $n$-th
subinterval $[(n-1)\Delta T, n\Delta T]$, where the size of $\Delta
T$ is given by \eqref{DeltaT}, provided that the boot-strap
condition
\begin{equation}\label{AB-cond:n}
C(B_n+A_n^2)(N^{-2\varepsilon}+ N^{-r+2\varepsilon})\le 1
\end{equation}
is satisfied. Moreover, these solutions satisfy the bound
\begin{align}\label{control:IuIv-n}
&\norm{Iu_n}_{X_+^{0, b}(S_{n\Delta T})}+
\norm{Iv_n}_{X_-^{0,b}(S_{n\Delta T})}\le CA_nN^{-s},\\
\label{control:Iphi-n}
 &\norm{I^2\phi_n}_{\mathcal H^{r-2s, b}(S_{n\Delta
T})}\le C(B_n+A_n^2)N^{-2s}.
\end{align}
So, if we can prove that $A_n$ and $B_n$ stay bounded for all $1\le
n\le K$, then \eqref{AB-cond:n} will be satisfied for all $1\le n\le
K$, choosing $\varepsilon$ small enough and $N$ large enough (recall
$r>0$). We can therefore apply Theorem \ref{theorem-modifiedlwp} $K$
times, and hence prove well-posedness on $[0, T]$.

 By \eqref{control:IuIv-n},
\eqref{control:Iphi-n} and the induction hypotheses
\eqref{Indhyp:uv} and \eqref{Indhyp:phi}, the estimates
\eqref{almostconser-IuIv-n} and \eqref{almostconser-Iphi-n} imply
\begin{align}\label{almostconser-IuIv:Nn}
&\norm{ Iu_n ( n\Delta T) }_{L^2}^2  + \norm{Iv_n ( n\Delta T)
}_{L^2}^2 \le A_n^2N^{-2s}+ C(B_n+A_n^2)A_n^2N^{-r-2s+2\varepsilon},
\\
\label{almostconser-Iphi:Nn} &\norm{ I^2\Phi_n [ n\Delta T]
}_{H^{r-2s}}\le CA_n^2\left( \Delta TN^{-2s} + (B_n+A_n^2)\Delta T
N^{-r-2s+2\varepsilon}+ N^{-1/2-2s+2\varepsilon}\right),
\end{align}
whereas \eqref{energy-Iphi0-n} and \eqref{Indhyp:phi} imply
\begin{equation}\label{energy-Iphi0-Nn}
\sup_{0\le t\le T}\norm{ I^2\phi_n^{(0)}[ t] }_{H^{r-2s}}\le
CB_nN^{-2s}.
\end{equation}

By \eqref{Idata:n} and \eqref{almostconser-IuIv:Nn} we obtain
\begin{align*}
\norm{ Iu_{n+1} ( n\Delta T) }_{L^2}^2  + \norm{Iv_{n+1} ( n\Delta
T) }_{L^2}^2&=\norm{ Iu_n ( n\Delta T) }_{L^2}^2  + \norm{Iv_n (
n\Delta T) }_{L^2}^2 \\
&\le A_n^2N^{-2s}+ C(B_n+A_n^2)A_n^2N^{-r-2s+2\varepsilon}.
\end{align*}
We therefore have
\begin{equation}\label{An+1}
 A_{n+1}^2\le A_n^2+ C(B_n+A_n^2)A_n^2N^{-r+2\varepsilon}.
\end{equation}

On the other hand, by \eqref{Idata:n}, \eqref{almostconser-Iphi:Nn}
and \eqref{energy-Iphi0-Nn} we get
\begin{align*}
\norm{ I^2\phi_{n+1} [n\Delta T] }_{H^{r-2s}}&=\norm{ I^2\phi_{n}
[n\Delta T] }_{H^{r-2s}}\\
&\le \norm{ I^2\phi_n^{(0)}[ n\Delta T] }_{H^{r-2s}}+\norm{
I^2\Phi_n [
n\Delta T] }_{H^{r-2s}}\\
&\le CB_nN^{-2s} + CA_n^2\left( \Delta TN^{-2s} + (B_n+A_n^2)\Delta
T N^{-r-2s+2\varepsilon}+ N^{-1/2-2s+2\varepsilon}\right)
\end{align*}
Therefore,
\begin{equation}\label{Bn+1-b}
 B_{n+1}\le  CB_n + CA_n^2 \Delta T + C(B_n+A_n^2)A_n^2\Delta T
N^{-r+2\varepsilon}+CA_n^2 N^{-1/2+2\varepsilon}.
\end{equation}
However, the presence of a constant $C$ in front of $B_n$ in the
first term of the r.h.s. of this inequality is bad, since then $B_n$
will grow exponentially in $n$; after $n$ induction steps, $
B_n\approx C^n$. To fix this problem, we follow ~\cite{s2007} to
write $\phi_n^{(0)}$ as a cascade of free waves:
$$ \phi^{(0)}_{n+1}=\phi^{(0)}_1 + \tilde{\phi}^{(0)}_2 + \cdot \cdot \cdot +\tilde{\phi}^{(0)}_n +  \tilde{\phi}^{(0)}_{n+1},$$
for $n\ge 1$, where
\begin{equation}\label{Iter-Iphi} \left\{\begin{aligned}
&\left(\square -m^2\right)\tilde{\phi}^{(0)}_{n+1}=0\\
&\tilde{\phi}^{(0)}_{n+1}( n\Delta T)=\Phi_{n}(n\Delta T), \\
&\partial_t\tilde{\phi}^{(0)}_{n+1}(n\Delta T)=\partial_t\Phi_n(
n\Delta T).
\end{aligned}
\right.
\end{equation}
Now, by energy inequality and \eqref{almostconser-Iphi:Nn} we have
\begin{equation}\label{almostconser-Itiphi:Nn}
\norm{ I^2\tilde{\phi}^{(0)}_{n+1} [ t] }_{H^{r-2s}}\le CA_n^2\left(
\Delta TN^{-2s} + (B_n+A_n^2)\Delta T N^{-r-2s+2\varepsilon}+
N^{-1/2-2s+2\varepsilon}\right),
\end{equation}
in the entire time interval $0\le t\le T$.

We now replace the induction hypothesis \eqref{Indhyp:phi} by the
stronger condition \begin{equation}\label{mod-indhyp:phi} \sup_{0\le
t\le T}\norm{ I^2\phi_n^{(0)}[t]}_{H^{r-2s}}\le B_nN^{-2s}.
\end{equation}
Since $\phi_{n+1}^{(0)}=\phi_{n}^{(0)}+ \tilde{\phi}_{n+1}^{(0)}$,
we have
$$
\norm{ I^2\phi_{n+1}^{(0)} [ t] }_{H^{r-2s}}\le \norm{
I^2\phi_n^{(0)}[t] }_{H^{r-2s}}+ \norm{I^2\tilde{\phi}^{(0)}_{n+1} [
t] }_{H^{r-2s}},
$$
 for all $0\le t\le T$. Then using \eqref{mod-indhyp:phi} and
 \eqref{almostconser-Itiphi:Nn}, we conclude
\begin{equation}\label{Bn+1-mod}
 B_{n+1}\le  B_n + CA_n^2 \Delta T + C(B_n+A_n^2)A_n^2\Delta T
N^{-r+2\varepsilon}+CA_n^2 N^{-1/2+2\varepsilon}.
\end{equation}
This estimate will be a replacement for the ``bad'' estimate
\eqref{Bn+1-b}.

 Now, we claim that if $\varepsilon > 0$ is chosen small enough,
and then $N$ large enough, depending on $\varepsilon$, then for $1
\le n \le K$,
\begin{equation}\label{claim-AB}
A_n\le \rho \equiv 2A_1, \qquad B_n\le  \sigma \equiv 2B_1+
4CTA_1^2.
\end{equation}
We proceed by induction. Assume that \eqref{claim-AB} holds for
$1\le n<k$, for some $k\le K.$ Then \eqref{AB-cond:n} reduces to
\begin{equation}\label{AB-condredu:n}
C(\sigma+\rho^2)(N^{-2\varepsilon}+ N^{-r+2\varepsilon})\le 1,
\end{equation}
for $n<k$. Since $r>0$, we can choose $\varepsilon$ very small and
$N$ very large to ensure that \eqref{AB-condredu:n} is satisfied. So
by \eqref{An+1} and \eqref{Bn+1-mod}, and the assumption that
\eqref{claim-AB} holds for $n < k$, we get (for $n < k$)
\begin{align*}
 &A_{n+1}^2\le A_1^2+ nC\sigma\rho^2N^{-r+2\varepsilon}\\
 &B_{n+1}\le  B_1 + n\left[C\rho^2 \Delta T + C\sigma\rho^2\Delta T
N^{-r+2\varepsilon}+  C\rho^2 N^{-1/2+2\varepsilon}\right].
\end{align*}
Furthermore, \eqref{claim-AB} will be satisfied for $A_k$ and $B_k$
provided that
\begin{align*}
 &(k-1)C\sigma\rho^2N^{-r+2\varepsilon}\le 3A_1^2\\
 &(k-1)\left(C\rho^2 \Delta T + C\sigma\rho^2\Delta T
N^{-r+2\varepsilon}+ C\rho^2 N^{-1/2+2\varepsilon}\right)\le
B_1+4CTA_1^2.
\end{align*}
Now, since $k\le K= T/(\Delta T)\le
CN^{(-s+\varepsilon)/(r-2s-2\varepsilon)} $, by \eqref{DeltaT}, it
suffices to have
\begin{align}
\label{cond:sr-1}
 &C\sigma\rho^2N^{(-s+\varepsilon)/(r-2s-2\varepsilon)-r+2\varepsilon}\le 3A_1^2,\\
 \label{cond:sr-2}
&CT\sigma\rho^2 N^{-r+2\varepsilon}\le B_1/2,
 \\
 \label{cond:sr-3}
 &C\rho^2 N^{(-s+\varepsilon)/(r-2s-2\varepsilon)-1/2+2\varepsilon}\le
 B_1/2,\\
 \label{cond:sr-4}
 &CT\rho^2\le 4CTA_1^2.
\end{align}
Here, to get the l.h.s. of \eqref{cond:sr-4} we used the fact that
$(k-1)\Delta T\le K\Delta T=T$; In fact, \eqref{cond:sr-4} holds
with equality, since $\rho=2A$. Since $r>0$, \eqref{cond:sr-2} will
be satisfied by choosing first $\varepsilon$ small enough and then
$N$ sufficiently large. To satisfy \eqref{cond:sr-1} and
\eqref{cond:sr-3}, it suffices to have
\begin{equation}\label{cond:sr-final}
\frac{-s+\varepsilon}{r-2s-2\varepsilon}-r+2\varepsilon<0, \qquad
\frac{-s+\varepsilon}{r-2s-2\varepsilon}-1/2+2\varepsilon<0.
\end{equation}
The first condition is equivalent to
$r^2-2sr+s>\varepsilon(4(r-s)+1-4\varepsilon).$ Choosing
$\varepsilon>0$ very small, this reduces to $r^2-2sr+s>0,$ i.e.,
$r>s+\sqrt{s^2-s}$, which holds by assumption
\eqref{reducedcond-sr}. The second condition in
\eqref{cond:sr-final} is weaker than the first condition since by
assumption \eqref{reducedcond-sr}, $r<1/2+2s$ and $s<0$.

Thus, \eqref{claim-AB} holds for $n=1, \cdot \cdot \cdot, K$, and
hence the proof is complete.

\section{Proof of Lemma \ref{lemma-errorestimate}}\label{sec-error-estimate}

Taking the Fourier transform in space, we get
\begin{equation}\label{FT:QI}
[Q_I(f,g)]\FT(\xi)=\int [q(\xi)-q(\eta)q(\xi-\eta)]\hat f(\eta)\hat
g(\xi-\eta)d\eta.
\end{equation}
Recall that the symbol $q(\zeta)=1$ for $\abs{\zeta}<N$.

We now write $u =  u_l + u_h $, $v=  v_l + v_h $, $\phi =  \phi_l +
\phi_h $ with ${\widehat{u_l}}, \ {\widehat{v_l}}, \
{\widehat{\phi_l}}$ supported on $\{ \xi : |\xi | \ll N \}$ and
${\widehat{u_h}}, \ {\widehat{v_h}}, \ {\widehat{\phi_h}}$ supported
on $\{ \xi : |\xi | \gtrsim N \}$. Since we are considering
(weighted) $L^2$ norms, we can replace ${\widehat{u}}$,
${\widehat{v}}$ and ${\widehat{\phi}}$ by $|{\widehat{u}}|$,
$|{\widehat{v}}|$ and $|{\widehat{\phi}}|$. Assume therefore that
${\widehat{u}} , {\widehat{v}}, {\widehat{\phi}} \geq 0$.

We only prove \eqref{Q:phi-u} since the proof for \eqref{Q:phi-v} is
quite similar. The only difference is that to prove \eqref{Q:phi-u},
we use the product estimate \eqref{prodembed:-+}, but to prove
\eqref{Q:phi-v}, we use \eqref{prodembed:+-}. We prove
\eqref{Q:phi-u} for all possible interactions. As a matter of
convenience we skip the time restriction in this section.

\subsection{Low/low interaction}
Recalling \eqref{FT:QI}, we have
$$[Q_I(\phi_{l}, u_l)]\FT(\xi)=\int
[q(\xi)-q(\eta)q(\xi-\eta)]\hat \phi_{l}(\eta)\hat
u_{l}(\xi-\eta)d\eta.$$ But since $\abs{\eta},\abs{\xi-\eta}\ll N$,
which in turn implies $\abs{\xi}< N$, the expression inside the
square bracket in the above integral vanishes.

\subsection{Low/high interaction} Then
$$[Q_I(\phi_{l}, u_h)]\FT(\xi)=\int
[q(\xi)-q(\xi-\eta)]\hat \phi_{l}(\eta)\hat u_{h}(\xi-\eta)d\eta,$$
because $q(\eta)=1$ on the support of $\hat \phi_{l}$.  By the mean
value theorem,
$$
\abs{q(\xi)-q(\xi-\eta)}\le \abs{q'(\zeta)}\abs{\eta},$$ where
$\zeta$ lies between $\xi$ and $\xi-\eta$.

 Now, assume
$\abs{\xi-\eta}\gg N$. Then $\abs{\eta}\ll \abs{\xi-\eta}$, and this
implies
$$\abs{\xi} \approx \abs{\xi-\eta}\approx \abs{\zeta}.$$ Hence
$$\abs{q'(\zeta)}=N^{-s}\abs{s\abs{\zeta}^{s-1}}\approx
N^{-s}\abs{s\abs{\xi-\eta}^{s-1}} $$ Next, assume
$\abs{\xi-\eta}\approx N$. If $\abs{\zeta}< N$, then $q'(\zeta)=0$.
If $\abs{\zeta}>2 N$, then
$$\abs{q'(\zeta)}=N^{-s}\abs{s\abs{\zeta}^{s-1}}\lesssim
N^{-s}\abs{\xi-\eta}^{s-1}.$$ Finally, assume $N\le \abs{\zeta}\le
2N$. In this case, we define $q(\xi)=\chi(\xi/N)$ where $\chi$ is a
smooth, even and monotone function defined by
\begin{equation*}
 \chi( \sigma )=
\begin{cases}
1 & \text{if $0\le \sigma<1$}, \\
\sigma^s & \text{if $ \sigma>2$}.
\end{cases}
\end{equation*}
Then
$$\abs{q'(\zeta)}\lesssim N^{-1}\lesssim
N^{s}\abs{\xi-\eta}^{s-1}.$$ We therefore conclude
$$
\abs{q(\xi)-q(\xi-\eta)}\lesssim
N^{-s}\abs{\xi-\eta}^{s-1}\abs{\eta}.
$$
Interpolating this with the  trivial estimate
$$
\abs{q(\xi)-q(\xi-\eta)}\lesssim N^{-s}\abs{\xi-\eta}^{s}
$$
we get
$$
\abs{q(\xi)-q(\xi-\eta)}\lesssim
N^{-s}\abs{\xi-\eta}^{s}\abs{\xi-\eta}^{-\theta}\abs{\eta}^{\theta},
$$
for $ \theta \in [0,1]$.

Then
\begin{equation}\label{mvt-app}
\begin{split}
\abs{[Q_I(\phi_{l}, u_h)]\FT(\xi)}&\lesssim \int
\abs{\eta}^{\theta}\hat \phi_{l}(\eta) \abs{\xi-\eta}^{-\theta}
N^{-s}\abs{\xi-\eta}^{s}\hat u_{h}(\xi-\eta)d\eta\\
&\lesssim[D^\theta\phi_{l}\cdot D^{-\theta}Iu_h]\FT(\xi) .
\end{split}
\end{equation}

Now, choosing $\theta= r-2s$ and applying the product estimate
\eqref{prodembed:-+}, we get
\begin{align*}
\norm{Q_I(\phi_{l},
u_h)}_{X_-^{0,-b}}&\lesssim\norm{D^{r-2s}\phi_{l}\cdot
D^{-r+2s}Iu_h}_{X_-^{0,-b}}\\&\lesssim\norm{D^{r-2s}\phi_{l}}_{
H^{0,
  b}}
\norm{D^{-r+2s}Iu_h}_{ X_+^{2\varepsilon, b}
}\\
  &\lesssim N^{-r+2s+2\varepsilon}\norm{\phi_{l}}_{
H^{r-2s,
  b}}\norm{Iu_h}_{ X_+^{0,
b} }.
\end{align*}
\subsection{High/low interaction}
A calculation similar to the preceding low/high interaction estimate
gives
\begin{align*}
\abs{[Q_I(\phi_h, u_{l})]\FT(\xi)}&\lesssim[D^{-\theta}I\phi_h\cdot
D^{\theta}u_{l}]\FT(\xi) .
\end{align*}
Take $\theta=0$. Applying the product estimate \eqref{prodembed:-+}
and \eqref{compare-Iz:z}, we get
\begin{align*}
\norm{Q_I(\phi_h, u_{l})}_{X_-^{0,-b}}&\lesssim\norm{I\phi_h\cdot
u_{l}}_{X_-^{0,-b}}\\&\lesssim\norm{I\phi_{h}}_{ H^{2\varepsilon,
  b}} \norm{u_{l}}_{ X_+^{0,
b}
}\\
&\lesssim N^{-r+s+2\varepsilon}\norm{I\phi_{h}}_{ H^{r-s,
  b}} \norm{u_{l}}_{ X_+^{0,
b} },\\
  &\lesssim N^{-r+2s+2\varepsilon}\norm{I^2\phi_{h}}_{
H^{r-2s,
  b}} \norm{u_{l}}_{ X_+^{0,
b} }.
\end{align*}
\subsection{High/high interaction}
Here, we do not take advantage of any cancellation. We instead use
the triangle inequality to get
\begin{align*}
\norm{Q_I(\phi_h, u_{h})}_{X_-^{0,-b}}\le \norm{I(\phi_h
u_{h})}_{X_-^{0,-b}}+ \norm{ I\phi_h\cdot Iu_{h}}_{ X_-^{0, -b} }.
\end{align*}
By \eqref{Ibdd-1}, the product estimate \eqref{prodembed:-+}, and
\eqref{compare-Iz:z}, we get
\begin{align*}
\norm{I(\phi_h u_h)}_{X_-^{0, -b}}&\lesssim \norm{\phi_h
u_h}_{X_-^{0, -b}}\\
&\lesssim \norm{\phi_h}_{H^{-s+2\varepsilon, b}}\norm{u_h}_{X_+^{s,
b}}\\
&= \norm{\phi_h}_{H^{r-r-s+2\varepsilon, b}} N^{s}
N^{-s}\norm{u_h}_{X_+^{s,
b}}\\
&\lesssim N^{-r-s+2\varepsilon}\norm{\phi_h}_{H^{r-s+s, b}} N^{s}
\norm{Iu_h}_{X_+^{0, b}}\\
&= N^{-r+s+2\varepsilon} \norm{I\phi_h}_{H^{r-s,
b}}\norm{Iu_h}_{X_+^{0, b}},\\
&\lesssim N^{-r+2s+2\varepsilon} \norm{I^2\phi_h}_{H^{r-2s,
b}}\norm{Iu_h}_{X_+^{0, b}},
\end{align*}
and
\begin{align*}
\norm{I\phi_h\cdot Iu_{h}}_{X_-^{0,-b}}&\lesssim\norm{I\phi_{h}}_{
H^{2\varepsilon,
 b}} \norm{
Iu_{h}}_{ X_+^{0,b}
}\\
  &\lesssim N^{-r+s+2\varepsilon}\norm{I\phi_{h}}_{
H^{r-s,
  b}} \norm{Iu_{h}}_{ X_+^{0,
b} }\\
 &\lesssim N^{-r+2s+2\varepsilon}\norm{I^2\phi_{h}}_{
H^{r-2s,
  b}} \norm{Iu_{h}}_{ X_+^{0,
b} } .
\end{align*}

\section{Proof of Lemma \ref{lemma-almostconsve-phi} }
First, we estimate the first term in the right hand side of
\eqref{Inteq-Iphi:2}. By energy inequality, \eqref{Ibdd-1}, Lemma
\ref{Sob-emb-thm} and \eqref{almostconser-IuIv}  we get (recall that
$r<1/2+2s$)
\begin{equation}\label{energy-Iphi1}
\begin{split}
&\norm{ \left(\square-m^2\right)^{-1}2\re(I(I u \cdot I\overline v))
[
\Delta T] }_{H^{r-2s}}\\
&\le C\int_0^{\Delta T}\norm{\re(I(Iu(t)\cdot I\overline
v(t)))}_{H^{r-2s-1}}dt\\
&\le C\int_0^{\Delta T}\norm{Iu(t)\cdot I\overline
v(t)}_{H^{r-2s-1}}dt\\
&\le
C\int_0^{\Delta T}\norm{Iu(t)}_{L^2}\norm{Iv(t)}_{L^2}dt\\
&\le
C\int_0^{\Delta T}\norm{Iu(t)}_{L^2}^2+\norm{Iv(t)}_{L^2}^2dt\\
&\le C\Delta T
\left(\norm{Iu_0}_{L^2}^2+\norm{Iv_0}_{L^2}^2\right)\\
&\quad + C\Delta TN^{-r+2s+2\varepsilon}\norm{I^2\phi}_{H^{r-2s,
b}(S_{\Delta T})}\norm{Iu}_{X_+^{0, b}(S_{\Delta T})}
\norm{Iv}_{X_-^{0,b}(S_{\Delta T})}.
\end{split}
\end{equation}

Now, we estimate the second term in the right hand side of
\eqref{Inteq-Iphi:2}. We claim that
\begin{equation}
\label{energy-Q:u-v}
\begin{split}
&\norm{\left (\square-m^2\right)^{-1}2\re (IQ_I( u,\overline v))
[ \Delta T] }_{H^{r-2s}}\\
&\le CN^{-1/2+2\varepsilon}\norm{Iu}_{X_+^{0, b}(S_{\Delta T})}
  \norm{ Iv}_{X_-^{0, b}(S_{\Delta T})}.
  \end{split}
\end{equation}
Assume for the moment that this claim is true. Then a combination of
the estimates \eqref{Inteq-Iphi:2}, \eqref{energy-Iphi1} and
\eqref{energy-Q:u-v} proves the Lemma.

It remains to prove the claim, \eqref{energy-Q:u-v}. By
\eqref{embxhc}, Lemma \ref{KGEElemma} and \eqref{Ibdd-1}
\begin{align*}
  &\norm{ \left(\square-m^2\right)^{-1}\re (IQ_I( u,\overline v)) [ \Delta T]
}_{H^{r-2s}}\\
&\le C  \norm{\left( \square-m^2\right)^{-1}\re (IQ_I( u, \overline
v))
}_{ H^{r-2s, b} (S_{\Delta T})}\\
&\le C \norm{IQ_I( u,\overline v)}_{H^{r-2s-1, b-1} (S_{\Delta
T})}\\
&\le C \norm{Q_I( u,\overline v)}_{H^{r-2s-1, b-1} (S_{\Delta T})}.
\end{align*}
Then to estimate $ \norm{Q_I( u, \overline v)}_{H^{r-2s-1,
b-1}(S_{\Delta T})}$ we follow a similar argument as in the
preceding subsection. As a matter of convenience we skip the time
restriction in the rest of the section. The contribution from the
low/low frequency interaction, $Q_I(u_l,v_l)$, vanishes by the same
argument as in the low/low frequency case in the preceding section.
For the low/high frequency case we use \eqref{mvt-app} with
$\theta=0$ (the high/low frequency case is similar) to get
$$
\abs{[Q_I( u_{l}, \overline {v_h})]\FT(\xi)}\lesssim \abs{[
u_{l}\cdot I\overline {v_h}]\FT(\xi)} .
$$
Then by \eqref{reduc-embedding:1},
\begin{align*}
\norm{ u_l\cdot I\overline{v_h} }_{H^{r-2s-1,b-1}} &\lesssim\norm{
u_l }_{X_+^{0,b}}
\norm{Iv_h}_{X_-^{-1/2+2\varepsilon,b}}\\
  &\lesssim N^{-1/2+2\varepsilon}\norm{u_{l}}_{
X_+^{0,
  b}}\norm{Iv_h}_{ X_-^{0,
b} }.
\end{align*}
To estimate the contribution from high/high interaction, we first
use the triangle inequality to get
$$
  \norm{Q_I( u_h,\overline{ v_h})}_{H^{r-2s-1,
b-1}}\le \norm{I( u_h \overline{ v_h})}_{H^{r-2s-1, b-1}}+ \norm{I
u_h\cdot I\overline v_h}_{H^{r-2s-1, b-1}}.
$$
Then applying \eqref{reduc-embedding:1}, we obtain
\begin{align*}
\norm{ I( u_h \overline{ v_h}) }_{H^{r-2s-1,b-1}} &\lesssim \norm{
u_h \overline{v_h} }_{H^{r-2s-1,b-1}}
\\
&\lesssim \norm{ u_h }_{X_+^{-1/4,b}}
\norm{v_h}_{X_-^{-1/4+2\varepsilon,b}}\\
  &\lesssim N^{-1/4-s}\norm{u_{h}}_{ X_+^{s,
  b}}N^{-1/4-s+2\varepsilon}\norm{v_h}_{
X_-^{s, b} }
\\
&\lesssim N^{-1/2+2\varepsilon}\norm{Iu_{h}}_{ X_+^{0,
  b}}\norm{Iv_h}_{ X_-^{0,
b} },
\end{align*}
and
\begin{align*}
\norm{ Iu_h\cdot I\overline{ v_h} }_{H^{r-2s-1,b-1}} &\lesssim
\norm{ Iu_h }_{X_+^{-1/4,b}}
\norm{Iv_h}_{X_-^{-1/4+2\varepsilon,b}}\\
&\lesssim N^{-1/2+2\varepsilon}\norm{Iu_{h}}_{ X_+^{0,
  b}}\norm{Iv_h}_{ X_-^{0,
b} }.
\end{align*}

\section{proof of Theorem \ref{theorem-modifiedlwp}}
Assume $0<\Delta T < 1$.
 Define
\begin{align*}
&\norm{Iw}_{X^{0,b}(S_{\Delta T})}= \norm{Iu}_{X_+^{0, b}(S_{\Delta
T})}+ \norm{Iv}_{X_-^{0,b}(S_{\Delta
T})},\\
&\norm{Iw_0}_{L^2}= \norm{Iu_0}_{L^2}+ \norm{Iv_0}_{L^2}.
\end{align*}
Applying Lemma \ref{DEElemma} to the first two equations and Lemma
\ref{KGEElemma} to the third equation of the $I$-system
\eqref{IDKG2}, we get
\begin{align*}
&\norm{Iu}_{X_+^{0, b}(S_{\Delta T})}\le C\left\{ \norm{Iu_0}_{L^2}+
\norm{I(\phi v)}_{X_+^{0, b-1}(S_{\Delta
T})}\right\},\\
&\norm{Iv}_{X_-^{0,b}(S_{\Delta T})}\le C\left\{ \norm{Iv_0}_{L^2}+
\norm{I(\phi u)}_{X_-^{0, b-1}(S_{\Delta
T})}\right\},\\
&\norm{I^2\phi}_{\mathcal H^{r-2s, b}(S_{\Delta T})}\le C\left\{
\norm{I^2\phi[0]}_{H^{r-2s}}+\norm{I^2(u \overline v)}_{H^{r-2s-1,
b-1 }(S_{\Delta T})}\right\}.
\end{align*}
   Now, we claim the following:
\begin{align}
\label{Ibilinear:phi-u}
 \norm{I(\phi u)}_{X_-^{0,
b-1}(S_{\Delta T})}&\le C\Gamma_1 \norm{I^2\phi}_{H^{r-2s,
b}(S_{\Delta T})}\norm{Iu}_{X_+^{0, b}(S_{\Delta T})},
\\
 \label{Ibilinear:phi-v}
\norm{I(\phi v)}_{X_+^{0, b-1}(S_{\Delta T})}&\le
C\Gamma_1\norm{I^2\phi}_{H^{r-2s, b}(S_{\Delta
T})}\norm{Iv}_{X_-^{0, b}(S_{\Delta T})} ,
\\
 \label{Ibilinear:u-v}
\norm{I^2( u\overline v)}_{H^{r-2s-1, b-1}(S_{\Delta T})}&\le
C\Gamma_2\norm{Iu}_{X_+^{0, b}(S_{\Delta T})}\norm{Iv}_{X_-^{0,
b}(S_{\Delta T})} ,
\end{align}
where
\begin{align*}
\Gamma_1=\Gamma_1(N, \Delta T):&=(\Delta T)^{2r-4s-4\varepsilon}+
N^{-r+2s+2\varepsilon},\\
\Gamma_2=\Gamma_2(N, \Delta T):&=(\Delta T)^{1-4\varepsilon}+
N^{-1/2+2\varepsilon}.
\end{align*}
 Assume for the moment that the claim is true. Then
\begin{align}\label{est:Iw}
\norm{Iw}_{X^{0, b}(S_{\Delta T})}&\le C\norm{Iw_0}_{L^2}+
C\Gamma_1\norm{I^2\phi}_{H^{r-2s, b}(S_{\Delta T})}\norm{Iw}_{X^{0,
b}(S_{\Delta T})},
\\
\label{est:Iphi} \norm{I^2\phi}_{\mathcal H^{r-2s, b}(S_{\Delta
T})}&\le C \norm{I^2\phi[0]}_{H^{r-2s}}+ C\Gamma_2\norm{Iw}^2_{X^{0,
b}(S_{\Delta T})}.
\end{align}
Using \eqref{est:Iphi}, the estimate \eqref{est:Iw} reduces to
\begin{equation}\label{est:Iw-reduc}
\begin{split}
\norm{Iw}_{X^{0, b}(S_{\Delta T})}&\le C\norm{Iw_0}_{L^2}+
C\Gamma_1\norm{I^2\phi[0]}_{H^{r-2s}}\norm{Iw}_{X^{0, b}(S_{\Delta
T})}+C\Gamma_1\Gamma_2\norm{Iw}^3_{X^{0, b}(S_{\Delta T})}\\
&\le CAN^{-s} + CBN^{-2s}\Gamma_1\norm{Iw}_{X^{0, b}(S_{\Delta T})}
+ C\Gamma_1\Gamma_2\norm{Iw}^3_{X^{0, b}(S_{\Delta T})}.
\end{split}
\end{equation}
So if
\begin{equation}\label{T-1stcond}
CBN^{-2s}\Gamma_1 (2CAN^{-s})+ C\Gamma_1\Gamma_2(2CAN^{-s})^3\le
CAN^{-s},
\end{equation}
then it follows by a boot-strap argument (see the
 Remark below for the detail on this argument) that
\begin{equation}\label{Iw-bdd}
\norm{Iw}_{X^{0, b}(S_{\Delta T})}\le 2CAN^{-s}.
\end{equation}
Now, if we choose
\begin{equation}\label{T-2ndcond}
\Delta T\approx  N^{(s-\varepsilon)/(r-2s-2\varepsilon)},
\end{equation}
 the boot-strap condition \eqref{T-1stcond} reduces to
(modifying $C$)
\begin{equation}\label{T-reducedcond}
C(B+A^2)\left(N^{-2\varepsilon}+ N^{-r+2\varepsilon}\right) \le 1.
\end{equation}
On the other hand, by \eqref{est:Iphi} we get (modifying $C$)
$$
\norm{I^2\phi}_{\mathcal H^{r-2s, b}(S_{\Delta T})}\le CBN^{-2s} +
4CA^2N^{-2s} \left(
N^{(s-\varepsilon)(1-4\varepsilon)/(r-2s-2\varepsilon)} +
N^{-1/2+2\varepsilon}\right).
$$
The second term in the r.h.s. of this inequality can be bounded by
$C(B+A^2)N^{-2s}$ since the quantity in the bracket is very small.
So, we obtain
\begin{equation}\label{est:Iphi-redu}
\norm{I^2\phi}_{\mathcal H^{r-2s, b}(S_{\Delta T})}\le
2C(B+A^2)N^{-2s}.
\end{equation}

\begin{remark}\label{bootstrap}
The above estimates imply LWP of \eqref{DKG2}, \eqref{data} with
time of existence up to $\Delta T>0$ given by \eqref{T-2ndcond}
provided that the condition \eqref{T-reducedcond} is satisfied. The
boot-strap argument mentioned above can be shown using the standard
iteration argument: Set $u^{(-1)}=v^{(-1)}=0$, and define for $n\ge
-1$ inductively
\begin{equation}\label{IDKG3}
\left\{
\begin{aligned}
  &iD_+(Iu^{(n+1)})=MIu^{(n)}-I(\phi^{(n)} v^{(n)}),\\
  & iD_-(Iv^{(n+1)}) =MIv^{(n)}-I(\phi^{(n)} u^{(n)}),\\
&Iu^{(n+1)}(0) = Iu_0\in L^2 , \quad Iv^{(n+1)}(0)=Iv_0\in L^2,
\end{aligned}
\right.
\end{equation}
 where    $$
  \square\phi^{(n)}=m^2\phi^{(n)} -  2\re(u^{(n)} \overline v^{(n)}),
$$
with the same data as for $\phi$.

Then, defining $y_n=\norm{Iw^n}_{X^{0,b}}$ for $n\ge 0$,
\eqref{est:Iw-reduc} becomes
$$
y_{n+1}\le CAN^{-s} + CBN^{-2s}\Gamma_1 y_n + C\Gamma_1\Gamma_2
y_n^3.
$$
By \eqref{Apriori-Dirac} and \eqref{u0v0p0}, $y_0\le 2CAN^{-s}$.
Now, if \eqref{T-reducedcond} holds, we conclude by induction that
$y_n\le 2CAN^{-s}$ for all $n\ge 0$. On the other hand, we know from
~\cite{st2007} that $(u^{(n)},v^{(n)})\rightarrow (u, v)\in
X_+^{s,b}\times X_-^{s,b}$ as $n\rightarrow \infty$, which implies
$Iw^{(n)}\rightarrow Iw\in X^{0,b}$ as $n\rightarrow \infty$, and
hence \eqref{Iw-bdd} follows.
\end{remark}
 It remain to prove the claim; i.e.,
\eqref{Ibilinear:phi-u}--\eqref{Ibilinear:u-v}. The estimates
\eqref{Ibilinear:phi-u} and \eqref{Ibilinear:phi-v} are symmetrical.
Hence we only prove \eqref{Ibilinear:phi-u} and
\eqref{Ibilinear:u-v}. As in Section \ref{sec-error-estimate}, we
decompose $u, v, \phi$ into low and high frequencies, and prove the
bilinear estimates for all possible interactions.
\subsection{Proof of \eqref{Ibilinear:phi-u}}
We recall that $s>-1/6, \ -s\le r<1/2+2s$, \ $b=1/2+\varepsilon$,
and the operator $I$ is the identity for low frequencies. Note also
that low-low interaction yields low frequency output. Then for the
low/low interaction, we have by \eqref{Ibdd-1} and the product
estimate \eqref{prodembed:-+}
\begin{equation}\label{uphi-ll1}
\begin{split}
\norm{I(\phi_l u_l)}_{X_-^{0, b-1}(S_{\Delta T})}&\le C \norm{\phi_l
u_l}_{X_-^{0, b-1}(S_{\Delta T})}\\
&\le C \norm{\phi_l}_{H^{2\varepsilon, b}(S_{\Delta
T})}\norm{u_l}_{X_+^{0, b}(S_{\Delta T})}.
\end{split}
\end{equation}
On the other hand, by \eqref{Ibdd-1}, Lemma \ref{Sob-emb-t},
H\"{o}lder in $t$,
 \eqref{Sob-prodembed:1} and \eqref{embxhc}, we
have
\begin{equation}\label{uphi-ll2}
\begin{split}
\norm{I(\phi_l u_l)}_{X_-^{0, b-1}(S_{\Delta T})}&\le C \norm{\phi_l
u_l}_{X_-^{0, b-1}(S_{\Delta T})}\\
&\le C\norm{\phi_l
u_l}_{L_t^{\frac{1}{1-2\varepsilon}}L_x^2(S_{\Delta
T})}\\
&\le C(\Delta T)^{1-2\varepsilon}\norm{\phi_l u_l}_{L_t^\infty
L^2_x(S_{\Delta
T})}\\
&\le C(\Delta T)^{1-2\varepsilon}\norm{\phi_l}_{L_t^\infty
H^{1/2+\varepsilon}_x(S_{\Delta T})}\norm{ u_l}_{L_t^\infty
L^2_x(S_{\Delta T})}
\\
 &\le C(\Delta T)^{1-2\varepsilon} \norm{\phi_l}_{H^{1/2+\varepsilon,b}(S_{\Delta
T})}\norm{u_l}_{X_+^{0, b}(S_{\Delta T})}.
\end{split}
\end{equation}
 Then interpolation between \eqref{uphi-ll1} and \eqref{uphi-ll2}, for $0\le \theta \le 1$, gives
\begin{align*}
&\norm{I(\phi_l u_l)}_{X_-^{0, b-1}(S_{\Delta T})}\\
 &\le C(\Delta T)^{(1-2\varepsilon)\theta} \norm{\phi_l}_{H^{2\varepsilon (1-\theta)+ (1/2+\varepsilon)\theta,b}(S_{\Delta
T})}\norm{u_l}_{X_+^{0, b}(S_{\Delta T})}.
\end{align*}
 We take
$\theta=\frac{2r-4s-4\varepsilon}{1-2\varepsilon}$ (by the
hypothesis made on $s ,r$, we then have $\theta\in [0, 1]$). This
implies $2\varepsilon (1-\theta)+ (1/2+\varepsilon)\theta=r-2s$ and
$(1-2\varepsilon)\theta=2r-4s-4\varepsilon$. Consequently,
 \begin{equation}\label{phi-u:ll}
\norm{I(\phi_l u_l)}_{X_-^{0, b-1}(S_{\Delta T})}
 \le C(\Delta T)^{2r-4s-4\varepsilon} \norm{\phi_l}_{H^{r-2s,b}(S_{\Delta
T})}\norm{u_l}_{X_+^{0, b}(S_{\Delta T})}.
\end{equation}

 The contribution from
low/high can be estimated using \eqref{Ibdd-1} and the product
estimate \eqref{prodembed:-+} as
\begin{equation}\label{phi-u:lh}
\begin{split}
\norm{I(\phi_l u_h)}_{X_-^{0, b-1}(S_{\Delta T})}&\le C \norm{\phi_l
u_h}_{X_-^{0, b-1}(S_{\Delta T})}\\
&\le C \norm{\phi_l}_{H^{r-2s,
b}(S_{\Delta T})}\norm{u_h}_{X_+^{2s-r+2\varepsilon, b}(S_{\Delta T})}\\
&= C\norm{\phi_l}_{H^{r-2s, b}(S_{\Delta
T})}N^{s} N^{-s}\norm{u_h}_{X_+^{s+s-r+2\varepsilon,b}(S_{\Delta T})}\\
&\le CN^{-r+2s+2\varepsilon} \norm{\phi_l}_{H^{r-2s, b}(S_{\Delta
T})}\norm{Iu_h}_{X_+^{0, b}(S_{\Delta T})}.
\end{split}
\end{equation}
The contribution from high/low can be estimated using
\eqref{Ibdd-1}, the product estimate \eqref{prodembed:-+}, and
\eqref{compare-Iz:z} as
\begin{equation}\label{phi-u:hl}
\begin{split}
\norm{I(\phi_h u_l)}_{X_-^{0, b-1}(S_{\Delta T})}&\le C \norm{\phi_h
u_l}_{X_-^{0, b-1}(S_{\Delta T})}\\
&\le C \norm{\phi_h}_{H^{2\varepsilon,
b}(S_{\Delta T})}\norm{u_l}_{X_+^{0, b}(S_{\Delta T})}\\
&\lesssim C N^{-r+s-s+2\varepsilon}\norm{\phi_h}_{H^{r-s+s,
b}(S_{\Delta
T})}\norm{u_l}_{X_+^{0,b}(S_{\Delta T})}\\
&= CN^{-r+s+2\varepsilon} \norm{I\phi_h}_{H^{r-s, b}(S_{\Delta
T})}\norm{u_l}_{X_+^{0, b}(S_{\Delta T})}\\
&\le CN^{-r+2s+2\varepsilon} \norm{I^2\phi_h}_{H^{r-2s, b}(S_{\Delta
T})}\norm{u_l}_{X_+^{0, b}(S_{\Delta T})}.\\
\end{split}
\end{equation}
 Similarly, we estimate the high/high interaction
using \eqref{Ibdd-1}, the product estimate \eqref{prodembed:-+}, and
\eqref{compare-Iz:z} as
\begin{equation}\label{phi-u:hh}
\begin{split}
\norm{I(\phi_h u_h)}_{X_-^{0, b-1}(S_{\Delta T})}&\le C \norm{\phi_h
u_l}_{X_-^{0, b-1}(S_{\Delta T})}\\
&\le C \norm{\phi_h}_{H^{-s+2\varepsilon,
b}(S_{\Delta T})}\norm{u_h}_{X_+^{s, b}(S_{\Delta T})}\\
&= C\norm{\phi_h}_{H^{r-s-r+2\varepsilon, b}(S_{\Delta T})}N^{s}
N^{-s}\norm{u_h}_{X_+^{s,b}(S_{\Delta T})}\\
&\le C N^{-r+s+2\varepsilon} \norm{I\phi_h}_{H^{r-s, b}(S_{\Delta
T})}\norm{Iu_h}_{X_+^{0, b}(S_{\Delta T})}\\
&\le C N^{-r+2s+2\varepsilon} \norm{I^2\phi_h}_{H^{r-2s,
b}(S_{\Delta T})}\norm{Iu_h}_{X_+^{0, b}(S_{\Delta T})}.
\end{split}
\end{equation}
Therefore, \eqref{Ibilinear:phi-u} follows from the estimates
\eqref{phi-u:ll}--\eqref{phi-u:hh}.
\subsection{Proof of \eqref{Ibilinear:u-v}}
We recall that  $s>-1/6, \ -s\le r<1/2+2s$ and $b=1/2+\varepsilon$.
Noting that $I$ is the identity for low frequencies, we have by
\eqref{Ibdd-1}, Lemma \ref{Thmembedding} and \eqref{Tfactor}
\begin{equation}\label{uv:ll}
\begin{split}
\norm{I^2(u_l \overline{v_l})}_{H^{r-2s-1, b-1}(S_{\Delta T})}&\le C\norm{u_l \overline{v_l}}_{H^{r-2s-1, -1/2+\varepsilon}(S_{\Delta T})}\\
&\le C \norm{u_l}_{X_+^{0,\varepsilon}(S_{\Delta
T})}\norm{v_l}_{X_-^{0,
\varepsilon}(S_{\Delta T})}\\
&\le  C(\Delta T)^{1-4\varepsilon}
\norm{u_l}_{X_+^{0,1/2-\varepsilon}(S_{\Delta
T})}\norm{v_l}_{X_-^{0,
1/2-\varepsilon}(S_{\Delta T})}\\
&\le C(\Delta T)^{1-4\varepsilon} \norm{u_l}_{X_+^{0,b}(S_{\Delta
T})}\norm{v_l}_{X_-^{0, b}(S_{\Delta T})}.
\end{split}
\end{equation}
The contribution from low/high interaction is estimated using
\eqref{Ibdd-1} and the product estimate \eqref{reduc-embedding:1} as
\begin{equation}\label{uv:lh}
\begin{split}
\norm{I^2(u_l \overline{v_h})}_{H^{r-2s-1, b-1}(S_{\Delta T})}&\le C\norm{u_l \overline{v_h}}_{H^{-1/2, b-1}(S_{\Delta T})}\\
&\le C \norm{u_l}_{X_+^{0, b}(S_{\Delta
T})}\norm{v_h}_{X_-^{-1/2+2\varepsilon,
b}(S_{\Delta T})}\\
&= C\norm{u_l}_{X_+^{0,b}(S_{\Delta
T})}\norm{v_h}_{X_-^{-1/2-s+2\varepsilon+s,
b}(S_{\Delta T})}\\
&\le C \norm{u_l}_{X_+^{0, b}(S_{\Delta T})}
N^{-1/2-s+2\varepsilon}\norm{v_h}_{X_-^{s,
b}(S_{\Delta T})}\\
&= CN^{-1/2+2\varepsilon}\norm{u_l}_{X_+^{0, b}(S_{\Delta T})}
\norm{Iv_h}_{X_-^{0, b}(S_{\Delta T})}.
\end{split}
\end{equation}
By symmetry
\begin{equation}\label{uv:hl}
\norm{I^2(u_h \overline{v_l})}_{H^{r-2s-1, b-1}(S_{\Delta T})}\le
CN^{-1/2+2\varepsilon}\norm{Iu_h}_{X_+^{0, b}(S_{\Delta T})}
\norm{v_l}_{X_-^{0, b}(S_{\Delta T})}.
\end{equation}
Finally, for the high/high interaction we obtain using
\eqref{Ibdd-1} and the product estimate
  \eqref{reduc-embedding:1}
 \begin{equation}\label{uv:hh}
\begin{split}
\norm{I^2(u_h \overline {v_h})}_{H^{r-2s-1, b-1}(S_{\Delta T})}&\le C\norm{u_h \overline{v_h}}_{H^{-1/2, b-1}(S_{\Delta T})}\\
&\le C \norm{u_h}_{X_+^{-1/4+2\varepsilon,
b}(S_{\Delta T})}\norm{v_h}_{X_-^{-1/4, b}(S_{\Delta T})}\\
&\le C N^{-1/4-s+2\varepsilon}\norm{u_h}_{X_+^{s,
b}(S_{\Delta T})}N^{-1/4-s}\norm{v_h}_{X_-^{s, b}(S_{\Delta T})}\\
& =CN^{-1/2+2\varepsilon} \norm{Iu_h}_{X_+^{0, b}(S_{\Delta
T})}\norm{Iv_h}_{X_-^{0, b}(S_{\Delta T})}.
\end{split}
\end{equation}
We therefore conclude that \eqref{Ibilinear:u-v} follows from the
estimates \eqref{uv:ll}--\eqref{uv:hh}.

\subsection*{Acknowledgements}
The author would like to thank Sigmund Selberg for continuous
support, encouragement and advice while writing this paper. The
author also thanks Damiano Foschi for helpful discussions and
generosity while I visited the University of Ferarra.

\end{document}